\newtheorem{theorem}[subsection]{Theorem}
\newtheorem{lemma}[subsection]{Lemma}
\newtheorem{sublemma}[subsubsection]{Lemma}
\newtheorem{corollary}[subsection]{Corollary}
\newtheorem{definition}[subsection]{Definition}
\newtheorem{remark}[subsection]{Remark}
\newcommand\testshape{family=\f@family; series=\f@series; shape=\f@shape.}
\def\myemphInternal#1{\if n\f@shape%
\begingroup\itshape #1\endgroup\/%
\else\begingroup\it\sffamily #1\endgroup%
\fi}
\def\myemph{\futurelet\testchar\MaybeOptArgmyemph}
\def\MaybeOptArgmyemph{\ifx[\testchar \let\next\OptArgmyemph
                 \else \let\next\NoOptArgmyemph \fi \next}
\def\OptArgmyemph[#1]#2{\index{#1}\myemphInternal{#2}}
\def\NoOptArgmyemph#1{\myemphInternal{#1}}
\newcommand\AEM[1]{#1}
\newcommand\BEM[1]{#1}
\newcommand{\bC}{\mathbb{C}}
\newcommand{\bR}{\mathbb{R}}
\newcommand{\bN}{\mathbb{N}}
\newcommand{\bZ}{\mathbb{Z}}
\newcommand{\cO}{\mathcal{O}}
\newcommand{\Int}{\mathrm{Int}}
\newcommand\ev[1]{\mathrm{ev}_{#1}}
\newcommand{\Cl}[1]{\mathop{\overline{#1}}\nolimits}
\newcommand\Asp{\BEM{A}}
\newcommand\Ksp{K}
\newcommand\Lsp{L}
\newcommand\Msp{M}
\newcommand\Xsp{X}
\newcommand\Ysp{Y} %{\color{blue}\Partition}}
\newcommand\Zsp{Z}
\newcommand\Usp{U}
\newcommand\Vsp{V}
\newcommand\Wsp{W}
\newcommand\dif{h}
\newcommand\kdif{k}
\newcommand{\Partition}{\Delta}
\newcommand{\leaf}{\omega}
\newcommand{\id}{\mathop{\mathrm{id}{}}\nolimits}
\newcommand{\prj}{\AEM{p}} %{\mathop{\mathrm{pr}{}}\nolimits}
\renewcommand{\emptyset}{\varnothing}
\newcommand\End{\mathcal{E}}
\newcommand\Homeo{\mathcal{H}}
\newcommand\Hm[1]{\Homeo(#1)}
\newcommand\HXP{\Homeo(\Xsp,\Partition)}
\newcommand\EY{\End(\Ysp)}
\newcommand\HY{\Hm{\Ysp}}
\newcommand\ahom{\psi}
\newcommand\specPoints{\mathrm{Br}(\Ysp)} % {\mathcal{V}}
\newcommand\hcl[1]{\mathrm{hcl}(#1)}
\newcommand\hclA[2]{\mathrm{hcl}_{#2}(#1)}
\newcommand\branch{branch}
\newcommand\CONT{\ensuremath{(C)}\xspace}
\newcommand\COMP{\ensuremath{(K)}\xspace}
\newcommand{\tG}{Q}
\author{Sergiy Maksymenko}
\email{maks@imath.kiev.ua}
\address{Institute of Mathematics of NAS of Ukraine, Tereshchenkivska str. 3, Kyiv, 01024, Ukraine}
\author{Eugene Polulyakh}
\email{polulyah@imath.kiev.ua}
\address{Institute of Mathematics of NAS of Ukraine, Tereshchenkivska str. 3, Kyiv, 01024, Ukraine}
\title[Actions of foliated homeomorphisms on spaces of leaves]{Actions of groups of foliated homeomorphisms on spaces of leaves}
\begin{document}

\begin{abstract}
Let $\Delta$ be a foliation on a topological manifold $X$, $Y$ be the space of leaves, and $p:X\to Y$ be the natural projection.
Endow $Y$ with the factor topology with respect to $p$.
Then the group $\mathcal{H}(X,\Delta)$ of foliated (i.e. mapping leaves onto leaves) homeomorphisms of $X$ naturally acts on the space of leaves $Y$, which gives a homomorphism $\psi:\mathcal{H}(X,\Delta) \to \mathcal{H}(Y)$.
We present sufficient conditions when $\ahom$ is continuous with respect to the corresponding compact open topologies.

In fact similar results hold not only for foliations but for a more general class of partitions $\Delta$ of locally compact Hausdorff spaces $X$.
\end{abstract}
\maketitle

\section{Introduction}
Let $\Xsp$ be an $m$-dimensional topological manifold and $\Partition$ be a foliation on $\Xsp$.
Denote by $\Ysp$ the space of leaves of $\Partition$ and let $\prj:\Xsp\to\Ysp$ be the natural projection associating to each $x\in\Xsp$ the leaf containing $x$.
Endow $\Ysp$ with the \myemph{factor} topology with respect to $\prj$, so a subset $A\subset\Ysp$ is open if and only if $\prj^{-1}(A)$ is open in $\Xsp$.

A homeomorphism $\dif:\Xsp\to\Xsp$ will be called
\begin{itemize}
\item
\myemph{foliated} if for each leaf $y$ of $\Partition$ its image $\dif(y)$ is also a leaf of $\Partition$;
\item
\myemph{leaf-preserving} if $\dif(y)=y$ for each leaf $y$ of $\Partition$.
\end{itemize}

Obviously, each leaf-preserving homeomorphism is foliated.

Denote by $\HXP$ the group of all foliated homeomorphisms of $\Xsp$ and by $\HY$ the group of all homeomorphisms of $\Ysp$.
Endow these groups with the compact open topologies.
Notice that in general the multiplication and inversion are not continuous operations in $\HXP$ and $\HY$ with respect to compact open topologies.
%In general, the multiplication in $\HY$ is
%In fact, continuity of multiplication in these groups is guaranteed if the corresponding spaces $\Xsp$ and $\Ysp$ are locally compact, e.g.~\cite[Chapter 7]{Kelley:JTop:1955}.

%Usually, $\Ysp$ is non-Hausdorff and therefore non-metrizable spaces.
The spaces of leaves $\Ysp$ of foliations often appear as spaces of orbits of flows and more generally of group actions and play an important role in the understanding the dynamics of that actions, e.g.~\cite{BouacidaEchiSalhi:JMSJ:2000, HattabSalhi:TP:2004,  BonattiHattabSalhiVago:TA:2011, Echi:TA:2012, Gauld:NonMetrManif:2014, BaillifGabardGauld:ProcAMS:2014, BarthelmeGogolev:MRL:2019} and others.
The usual difficulty arising at once when we pass from the manifold $\Xsp$ to the space of leaves $\Ysp$ is that $\Ysp$ is usually non-Hausdorff.
Moreover, if some leaves of $\Partition$ are non-closed as subsets of $\Xsp$, (e.g. dense in some open subset), then $\Ysp$ is not $T_1$ as well.
Let us also mention that in \cite{Gauld_vanMill:NZJM:2012} it was given a characterization of a manifold $\Ysp$ to be metrizable in terms of $\HY$ endowed with compact open topology.

%On the other hand, $\Ysp$ remains to be a manifold but non-metrizable and this allows to have some , see e.g. the book~\cite{Gauld:NonMetrManif:2014} discussing such manifolds especially in dimension $1$.
%The present paper is devoted to study
%In \cite{Gauld_vanMill:NZJM:2012} it was show that a manifold $\Xsp$ is metrizable iff its group of homeomorphisms endowed with compact open topology is a $q$-space.
%\cite{BouacidaEchiSalhi:JMSJ:2000, HattabSalhi:TP:2004, Echi:TA:2012}
%\cite{Stevens:TA:2012} different metrizable topologies of the group $\bR^n$ making it into a topological group.
%In \cite{HattabSalhi:TP:2004} is was studied partition of a topological spaces into closures of points, quasi-homeomorphisms,
%$X$
%The authors show that if $E$ is a locally compact topological space with countable weight, when it is a spectral space
%
% G is a countable subgroup of the group of all homeomorphisms of E, ρ is the equivalence relation on E determined by the family of orbits G(x)={g(x):g∈G} of the points of E, and each decreasing sequence of ρ-saturated closed subsets of E is finite, then the quasi-orbit space X (i.e. the space of orbit classes) is a coherent space.

Homotopy properties of groups $\HXP$ were studied e.g. in~\cite{Rybicki:MonM:1995, Rybicki:SJM:1996, Rybicki:DM:1996, Rybicki:DGA:1999, Rybicki:DGA:2001, HallerTeichmann:AGAG:2003, AbeFukui:CEJM:2005, LechRybicki:BCP:2007, Maksymenko:OsakaJM:2011} and references therein.
Most of them extend the results by M.~Herman~\cite{Herman:CR:1971}, W.~Thurston~\cite{Thurston:BAMS:1974}, J.~Mather~\cite{Mather:Top:1971, Mather:BAMS:1974} and D.~B.~A.~Epstein~\cite{Epstein:CompMath:1970} about perfectness of such groups (i.e. triviality homologies of their classifying spaces).

The aim of the present paper is to propose a certain approach for relating the homotopy types of the groups $\HXP$ and $\HY$.
Notice that each $\dif\in\HXP$ yields a permutation $\ahom(\dif):\Ysp\to\Ysp$ of the leaves of $\Partition$ such that the following diagram is commutative:
\begin{equation}\label{equ:psi_diagram}
\begin{CD}
\Xsp @>{\dif}>> \Xsp \\
@V{\prj}VV  @VV{\prj}V \\
\Ysp @>{\ahom(\dif)}>> \Ysp
\end{CD}
\end{equation}
One can easily check that $\ahom(\dif)$ is in fact a \myemph{homeomorphism} of $\Ysp$, see e.g. Lemma~\ref{lm:f_pg} below, while the correspondence $\dif\to\ahom(\dif)$ is a well-defined \myemph{homomorphism} $\ahom:\HXP\to\HY$.
Evidently, its kernel $\ker(\ahom)$ consists of \myemph{leaf-preserving} homeomorphisms.

We will study the question whether the following natural property holds:

\begin{enumerate}[label=\CONT]
\item\label{pr:ahom_cont}
The homomorphism $\ahom:\HXP\to\HY$ is continuous with respect to the corresponding compact open topologies of those groups.
\end{enumerate}

It is satisfied in many special cases, however the authors were not able to find its general investigations in the available literature.
One of the reasons is that the space of leaves is usually non-Hausdorff, while compact open topologies are well-studied for locally compact Hausdorff spaces, e.g.~\cite{Fox:BAMS:1945}.
%\cite{BouacidaEchiSalhi:JMSJ:2000, HattabSalhi:TP:2004, Echi:TA:2012}
%\cite{Stevens:TA:2012} different metrizable topologies of the group $\bR^n$ making it into a topological group.

%For example, if $p:\bR^{p+q}\to\bR^{q}$ is given by $p(x,y)=y$
%On the other hand, if $\prj$ is a locally trivial fibration, n some , though in some particular cases it is rather trivialsometimes used without proof.
%For example,

Under essentially more general settings than foliations we will show, see Lemma~\ref{lm:cond_for_cont_ahom}, that property~\ref{pr:ahom_cont}
% is implied by
is a consequence of
the following condition:
\begin{enumerate}[label=\COMP]
\item\label{pr:ahomL_K}
For every compact $\Lsp\subset\Ysp$ there exists a compact $\Ksp\subset\Xsp$ such that $\prj(\Ksp)=\Lsp$.
\end{enumerate}

Further we will describe several particular situations when~\ref{pr:ahomL_K} and therefore~\ref{pr:ahom_cont} hold, see Corollary~\ref{cor:cond_for_cont_ahom}.
On the other hand, we will also present examples when \CONT holds, while \COMP fails, see examples in  Seciton~\ref{sect:examples}.
% Examples~\ref{ex.irrational_flow}, \ref{ex.irrational_rotation_on_circle} and~\ref{ex.Denjoy}.
They are well-known partition of torus by orbits of irrational flow and partition into orbits of Denjoy homeomorphism of the circle.

The following Theorem~\ref{th:fK_L__foliations} is one of the principal results of the paper.

Say that a subset $\Asp\subset\Ysp$ is \myemph{locally finite}, if every $y\in \Ysp$ has a neighborhood $\Usp$ such that $\Usp\cap\Asp$ is a finite set.

We also say that points $y,z\in\Ysp$ are \myemph{$T_2$-disjoint}, if they have disjoint neighborhoods.
If a point $y\in\Ysp$ is not $T_2$-disjoint from some other point $z\in\Ysp$, then $y$ will be called a \myemph{\branch} point of $\Ysp$, see Section~\ref{sect:branch_points} for details.

%is \branch\ if there is another point $z\in\Ysp$ such that $y \in \Cl{\{z\}}$.
%It is a
% is that \ref{pr:ahomL_K} holds for the case when all leaves of $\Partition$ are closed subsets (equivalently $\Ysp$ is $T_1$-space) and the family of points where $\Ysp$ ``looses Hausdorff property'' is locally finite, see Theorem~\ref{th:fK_L}.
\begin{theorem}\label{th:fK_L__foliations}
Let $\Partition$ be a foliation on a Hausdorff topological manifold $\Xsp$.
Suppose that
\begin{enumerate}[label={\rm\alph*)}]
\item the space of leaves $\Ysp$ of $\Partition$ is a $T_1$-space, i.e. each leaf $\leaf$ of $\Partition$ is a closed subset of $\Xsp$;
\item the set of \branch\ points of $\Ysp$ is locally finite.
\end{enumerate}
Then the map~\eqref{equ:ahom_EXP_EY} $\ahom:\HXP\to\HY$ is continuous with respect to compact open topologies.
\end{theorem}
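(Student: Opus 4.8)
The plan is to deduce the theorem from Lemma~\ref{lm:cond_for_cont_ahom}: it suffices to verify condition~\ref{pr:ahomL_K}, i.e.\ that every compact $L\subseteq\Ysp$ is the image $\prj(K)$ of some compact $K\subseteq\Xsp$. Throughout I use the standard facts that a foliation on a Hausdorff manifold makes $\Xsp$ locally compact Hausdorff, that the $\prj$-saturation of an open set is open (so $\prj$ is an open map), and that by hypothesis~a) every leaf is closed in $\Xsp$, equivalently $\Ysp$ is $T_1$.

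First I would set up the branch-point bookkeeping. Writing $\specPoints$ for the set of \branch\ points, hypothesis~b) says $\specPoints$ is locally finite; together with $T_1$ this makes $\specPoints$ a \emph{closed and discrete} subset of $\Ysp$ (a locally finite family of closed points is closed, and each point of $\specPoints$ is isolated in $\specPoints$). Consequently $L\cap\specPoints$, being a closed discrete subset of the compact set $L$, is \emph{finite}. Next I would analyse the defect $\overline{L}\setminus L$. A routine separation argument shows $\overline{L}\setminus L\subseteq\specPoints$: if $y\in\overline{L}$ were $T_2$-disjoint from every point of $L$, then a finite subcover of $L$ by the separating open sets would produce a neighbourhood of $y$ missing $L$, contradicting $y\in\overline{L}$; hence any $y\in\overline{L}\setminus L$ fails to be $T_2$-disjoint from some $\ell\in L$, so both $y$ and $\ell$ lie in $\specPoints$, and in fact $\ell\in L\cap\specPoints$. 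Using the structure of branch points developed in Section~\ref{sect:branch_points} (each branch point has only finitely many non-$T_2$-separated partners, a consequence of local finiteness), it follows that $\overline{L}\setminus L$ is a finite set of branch points.

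With the finite ``co-branch'' set $\overline{L}\setminus L=\{b_1,\dots,b_r\}$ in hand, I would build $K$ explicitly. The leaves $\prj^{-1}(b_1),\dots,\prj^{-1}(b_r)$ are closed in $\Xsp$, so their union $N$ is closed. For each $y\in L$ we have $y\notin\{b_1,\dots,b_r\}$, hence $\prj^{-1}(y)\cap N=\emptyset$; choosing a point $x_y\in\prj^{-1}(y)$ and a foliation chart at $x_y$, we may take a compact ``box'' neighbourhood $D_y$ of $x_y$ inside that chart small enough to be disjoint from the closed set $N$. Since $\prj$ is open, $\prj(\operatorname{int}D_y)$ is an open neighbourhood of $y$ and $\prj(D_y)\supseteq\prj(\operatorname{int}D_y)$. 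By compactness of $L$ finitely many of these images cover $L$; let $C$ be the union of the corresponding finitely many compact boxes $D_{y_i}$. Then $C$ is compact, $\prj(C)\supseteq L$, and $C\cap N=\emptyset$.

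Finally I would set $K:=C\cap\prj^{-1}(L)$. Surjectivity $\prj(K)=L$ is immediate because $\prj(C)\supseteq L$. For compactness it suffices to check $K$ is closed in $\Xsp$: if $x\in\overline{K}$ then $x\in C$ (as $C$ is compact, hence closed) and $\prj(x)\in\overline{\prj(K)}\subseteq\overline{L}$; but $C\cap N=\emptyset$ forces $\prj(x)\notin\overline{L}\setminus L$, so $\prj(x)\in L$ and therefore $x\in C\cap\prj^{-1}(L)=K$. Thus $K$ is a compact set with $\prj(K)=L$, which establishes condition~\ref{pr:ahomL_K} and, via Lemma~\ref{lm:cond_for_cont_ahom}, the continuity of $\ahom$. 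The main obstacle is the middle step: controlling the non-Hausdorff behaviour of $\Ysp$ well enough to guarantee that only \emph{finitely many} unwanted branch-leaves have to be avoided, i.e.\ that $\overline{L}\setminus L$ is finite; this is exactly where hypothesis~b) (local finiteness of branch points) and the branch-point analysis of Section~\ref{sect:branch_points} are essential.
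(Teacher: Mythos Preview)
Your overall strategy --- reduce to \COMP via Lemma~\ref{lm:cond_for_cont_ahom}, cover $\Lsp$ by images of small compact boxes $D_y$, and set $\Ksp=C\cap\prj^{-1}(\Lsp)$ --- is sound, and the final closedness argument for $\Ksp$ is correct. But one intermediate claim is false: local finiteness of $\specPoints$ does \emph{not} imply that each branch point has only finitely many non-$T_2$-disjoint partners, so $\Cl{\Lsp}\setminus\Lsp$ need not be finite. Consider the line with countably many origins, $\Ysp=(\bR\setminus\{0\})\cup\{0_n:n\in\bN\}$, where a basic neighbourhood of $0_n$ is $(-\varepsilon,0)\cup\{0_n\}\cup(0,\varepsilon)$. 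Here $\specPoints=\{0_n:n\in\bN\}$ is locally finite (every point has a neighbourhood meeting $\specPoints$ in at most one point), yet $\hcl{0_1}=\{0_n:n\in\bN\}$ is infinite; for the compact set $\Lsp=[-1,0)\cup\{0_1\}\cup(0,1]$ one has $\Cl{\Lsp}\setminus\Lsp=\{0_n:n\geq 2\}$, which is infinite. This $\Ysp$ arises as the quotient of $\Xsp=\bR\times\bN$ by the open surjection collapsing all copies of each $t\neq 0$, so the hypotheses of Theorem~\ref{th:fK_L} are satisfied.

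The repair is painless, because you never actually use the finiteness --- only that $N=\prj^{-1}(\Cl{\Lsp}\setminus\Lsp)$ is closed. This holds: $\Cl{\Lsp}\setminus\Lsp\subset\specPoints$ (your separation argument, which is exactly Lemma~\ref{lm:specpt}\ref{enum:lm:specpt:comp}), every subset of a locally finite set is again locally finite, and a locally finite subset of a $T_1$ space is closed by Lemma~\ref{lm:T1_locfin}; hence $N$ is closed by continuity of $\prj$, and the rest of your argument goes through verbatim. For comparison, the paper's proof of Theorem~\ref{th:fK_L} never forms this global set $N$. Instead it chooses each $\Usp_x$ so small that $\prj(\Cl{\Usp_x})$ lies inside a neighbourhood $\Vsp_{\prj(x)}$ meeting $\specPoints$ only in $\prj(x)$ itself, and then proves the local identity $\Lsp\cap\prj(\Cl{\Usp_x})=\Cl{\Lsp}\cap\prj(\Cl{\Usp_x})$ (Sublemma~\ref{lm:sub:fK_L}); this makes $\prj^{-1}(\Lsp)\cap\Cl{\Usp_x}$ closed in the compact set $\Cl{\Usp_x}$ directly, without ever analysing $\Cl{\Lsp}\setminus\Lsp$ as a whole.
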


This theorem is a special case of Theorem~\ref{th:fK_L} and will be proved in subsection~\ref{sect:ect:proof:th:fK_L__foliations}.

%It includes the cases when $\Xsp$ is non-compact and $\Ysp$ is non-Hausdorff.
%The proof is based on local compactness of $\Xsp$ and openness of $\prj$.
%In fact it holds for more general kinds of partitions $\Partition$ (not only foliations) of locally compact (and not necessarily Hausdorff) spaces.

\subsection*{Evaluation maps}
%Applications to homotopy types of $\HXP$ and $\HY$}
Property~\ref{pr:ahom_cont} has also several consequences which relate the homotopy groups of $\HXP$ and $\HY$.

Let $x\in\Xsp$, $y = \prj(x) \in\Ysp$ be the leaf containing $x$ and
\begin{align*}
	&\ev{x}:\HXP \to \Xsp,   && \ev{y}:\HY \to \Ysp, \\
	&\ev{x}(\dif) = \dif(x),  && \ev{y}(g)= g(y),
\end{align*}
be the \myemph{evaluation} maps at $x$ and $y$ respectively.
It is well known and is easy to show that $\ev{x}$ and $\ev{y}$ are continuous, see Lemma~\ref{lm:comp_open_top}.

Then commutativity of the diagram~\eqref{equ:psi_diagram} implies another commutative diagram:
\begin{equation}\label{equ:ev_diagram}
\begin{CD}
\HXP @>{\ahom}>> \HY \\
@V{\ev{x}}VV  @VV{\ev{y}}V \\
\Xsp @>{\prj}>> \Ysp
\end{CD}
\end{equation}
Indeed,
\begin{align*}
	\prj \circ \ev{x}(\dif) = \prj \bigl(\dif(x)\bigr) =
	\ahom(\dif)  \bigl(\prj(x)\bigr)  =
		\ahom(\dif) (y)  =  \ev{y}\circ \ahom(\dif).
\end{align*}

Till the end of this section assume that property~\ref{pr:ahom_cont} holds, that is $\ahom$ is continuous, and so~\eqref{equ:ev_diagram} consists of continuous maps.
%Let us make the following remarks.
Then the following statements hold.

1) For each $n\geq0$ diagram~\eqref{equ:ev_diagram} induces a diagram consisting of the corresponding $k$-th homotopy groups and induced homomorphisms:
\begin{equation}\label{equ:ev_diagram_pik}
\begin{CD}
\pi_k\bigl( \HXP, \id_{\Xsp} \bigr) @>{\ahom_k}>> \pi_k\bigl(\HY, \id_{\Ysp}\bigr) \\
@V{(\ev{x})_k}VV  @VV{(\ev{y})_k}V \\
\pi_k(\Xsp,x) @>{\prj_k}>> \pi_k(\Ysp,y)
\end{CD}
\end{equation}
Let us mention that for $k=0$ the sets $\pi_0\bigl( \HXP, \id_{\Xsp} \bigr)$ and $\pi_0\bigl(\HY, \id_{\Ysp}\bigr)$ are in fact groups, and the induced map
\[
\ahom_0: \pi_0\bigl( \HXP, \id_{\Xsp} \bigr) \to \pi_0\bigl(\HY, \id_{\Ysp}\bigr)
\]
of those groups is a \myemph{homomorphism} as well, see Lemma~\ref{lm:top_monoid}.

2) There is a similar diagram for any subgroup $\mathcal{G}$ of $\HXP$:
\begin{equation}\label{equ:ev_diagram_G}
\begin{CD}
\mathcal{G} @>{\ahom}>> \HY \\
@V{\ev{x}}VV  @VV{\ev{y}}V \\
\Xsp @>{\prj}>> \Ysp
\end{CD}
\end{equation}

3) Finally, let us consider a very important case when $\Xsp$ is a \myemph{smooth} manifold, $\Partition$ is a \myemph{smooth} foliation, and $\mathcal{G}$ is the group of foliated diffeomorphisms of $\Xsp$.
Then $\mathcal{G}$ is usually endowed with a \myemph{strong} or \myemph{weak} $C^{r}$ topology for some $r\geq0$.
Since the \myemph{compact open topology} is the same as \myemph{weak $C^{0}$} and is the weakest among all the above topologies, we see that the homomorphism $\ahom:\mathcal{G}\to\HY$ is still continuous into compact open topology of $\HY$, and we still get the commutative diagram:
\begin{equation}\label{equ:ev_diagram_G_pik}
\begin{CD}
\pi_k\bigl( \mathcal{G}, \id_{\Xsp} \bigr) @>{\ahom}>> \pi_k\bigl(\HY, \id_{\Ysp}\bigr) \\
@V{\ev{x}}VV  @VV{\ev{y}}V \\
\pi_k(\Xsp,x) @>{\prj}>> \pi_k(\Ysp,y)
\end{CD}
\end{equation}

%As a more concrete application we will compute the homomorphism $\ahom: \pi_0\bigl( \HXP, \id_{\Xsp} \bigr) \to \pi_k\bigl(\HY, \id_{\Ysp}\bigr)$ for a special kind of $1$-dimensional foliations on non-compact surfaces.

\subsection*{Structure of the paper}

In the preliminary section~\ref{sect:preliminaries} we discuss topological monoids and non-Hausdorff locally compact spaces.
%, and compact open topologies on the spaces of maps between such spaces.
It is shown that many properties of compact open topologies on the spaces of continuous maps between locally compact Hausdorff topological spaces are preserved if we omit Hausdorff property.
In fact, the presented results are known and in some cases are rather simple, but we give their proofs just to assure that we do not assume Hausdorff property.

In Section~\ref{sect:arb_partitions} we also consider arbitrary partitions not only foliations, and prove in Lemma~\ref{lm:cond_for_cont_ahom} that condition \COMP implies \CONT.

Section~\ref{sect:branch_points} is devoted to the study of so called \branch\ points at which the space of leaves looses Hausdorff property.
Finally in section~\ref{sect:loc_fin_subsets} we prove Theorem~\ref{th:fK_L} and its particular case Theorem~\ref{th:fK_L__foliations}.

\section{Preliminaries}\label{sect:preliminaries}
%Let $\Xsp$ be a topological space.
%Then there are several definitions of local compactness which coincide if $\Xsp$ is Hausdorff:
%\begin{enumerate}
%\item every point $x\in\Xsp$ has a compact neighborhood, i.e. there is a compact set $K$ such that $x\subset \Int{K}$;
%\item every point $x\in\Xsp$ has a \myemph{closed} compact neighborhood;
%\item every point $x\in\Xsp$ has a local base consisting of compact neighborhoods.
%\end{enumerate}
%
%For our purposes
%compa
%
%We will use the following one, which allows to extend many results on compact open topologies for maps .
%

\subsection*{Topological monoids}
Recall that a \myemph{monoid} structure on a set $G$ is a map $\mu:G \times G\to G$ being associative and having a \myemph{unit} element $e\in G$, that is $\mu(a,\mu(b,c))=\mu(\mu(a,b),c)$ and $\mu(a,e) = \mu(e,a)=a$ for all $a,b,c\in G$.
We will usually denote $\mu(a,b)$ simply by $ab$.

Notice that the unit element in a monoid is unique.
Indeed, if $e'$ is another unit, then $e' = ee' = e$.

A \myemph{homomorphism} of monoids $q:G \to H$ with units $e_G$ and $e_H$ respectively is a map such that  $q(e_G) = e_H$ and $q(ab) = q(a) q(b)$ for all $a,b\in G$.

If a monoid $G$ is endowed with a topology in which $\mu$ is continuous, then it is called a \myemph{topological monoid}.

% if
%Let $G$ be a \myemph{topological monoid}, that is topological space admitting a continuous map $\mu:G\times G \to G$ and an element $e_G\in G$ such that $\mu(a,\mu(b,c))=\mu(\mu(a,b),c)$ and $\mu(a,e_G) = \mu(e_G,a)=a$ for all $a,b,c\in G$.

Let $G$ be a topological monoid.
For each $a\in G$ denote by $G_a$ the path component of $G$ containing $a$.
Recall that the set of all path components of $G$ with a distinguished element $G_e$ is denoted by $\pi_0(G,e)$ and called $0$-th homotopy set of $G$ at $e$.
Formally it can be defined as the set of homotopy classes
\[
\pi_0(G,e) = \bigl[ ( \{0,1\}, 0 ), (G, e) \bigr]
\]
of maps of $0$-dimensional sphere $S^{0}\equiv\partial[0,1]\equiv\{0,1\} \to G$ sending $0$ to $e$.
In general $0$-th homotopy set of a topological space is not a group in contrast to other sets $\pi_k(G,e)$ with $k\geq1$.
However, as the following easy and well known lemma claims, $\pi_0(G,e)$ inherits algebraic structure of $G$, therefore it is a topological monoid or a topological group.

\begin{lemma}\label{lm:top_monoid}
\begin{enumerate}[label={\rm(\alph*)}, wide, itemsep=1ex]
\item\label{enum:top_monoid:homo}
Let $q:G \to H$ be a homomorphism of monoids with units $e_G$ and $e_H$ respectively.
If $a\in G$ is invertible, then $q(a)^{-1} = q(a^{-1})$ is invertible in $H$.
In particular, if $G$ is a group, and $q$ is surjective, then $H$ is a group as well.

%For a topological monoid $G$ the following statements hold true.
\item\label{enum:top_monoid:Ge}
Let $G$ be a topological monoid.
Then $G_a G_b:= \mu(G_a\times G_b) \subset G_{ab}$ for all $a,b\in G$.
Hence one can define a \myemph{monoid} structure on $\pi_0(G,e)$ by $G_a * G_b = G_{ab}$, so that the natural projection $q:G \to \pi_0(G,e)$ defined by $q(a) = G_a$ becomes a morphism of monoids.

%
% for all $a,b\in G$, whence one can define a
%so there is a well-defined factor monoid $G/G_e$ and the quotient map $q:G \to G/G_e$ is a homomorphism of monoids.
%Moreover, there is a natural bijection $\eta:G/G_e \to \pi_0(G,e)$, and thus $\pi_0(G,e)$ is a monoid as well.

%If $a\in G$ is invertible, then $G_a$ is invertible in
%\item\label{enum:top_monoid:when_G_is_group}
If $G$ is a group (not necessary topological, i.e.\! the inversion map is not necessarily continuous), then $G_e$ is a normal subgroup, $G_a = a G_e = G_e a$ for each $a\in G$, so the above map $q$ is a composition $q:G \to G/G_e \cong \pi_0(G,e)$.
% factors through  $\pi_0(G,e)$ can be identified with the factor group $ G / G_e$.

%, and $q:G \to \pi_0(G,e)$ is a homomorphism of groups.

%\item\label{enum:top_monoid:GGe_pi0G}

\item\label{enum:top_monoid:morphism_of_monoids}
Let $\ahom:G \to H$ be a continuous homomorphism of topological monoids with unit elements $e_G$ and $e_H$ respectively.
Then the induced mapping $\ahom_0:\pi_0(G,e_G) \to \pi_0(H,e_H)$ is a homomorphism of monoids.
If $\pi_0(G,e_G)$ and $\pi_0(H,e_H)$ are groups, then $\ahom_0$ is a homomorphism of groups.
\end{enumerate}
\end{lemma}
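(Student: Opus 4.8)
The plan is to treat the three parts in order, since each later part draws on the earlier ones; the only place where topology (as opposed to pure monoid algebra) really enters is the inclusion $G_a G_b \subset G_{ab}$ in part~\ref{enum:top_monoid:Ge}, and that is the step I expect to carry the real content. Everywhere else is bookkeeping, and the deliberate avoidance of continuity of inversion dictates the routing of the arguments.

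For part~\ref{enum:top_monoid:homo} I would argue purely algebraically. If $a$ is invertible, then from $q(a)q(a^{-1}) = q(aa^{-1}) = q(e_G) = e_H$ together with the symmetric computation $q(a^{-1})q(a)=e_H$ I read off that $q(a)$ is invertible with inverse $q(a^{-1})$. The ``in particular'' clause is then immediate: if $G$ is a group, then every element of $H = q(G)$ is a value $q(a)$ of an invertible $a$, hence invertible.

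For part~\ref{enum:top_monoid:Ge} the heart of the matter is the inclusion $G_a G_b \subset G_{ab}$. Given $a' \in G_a$ and $b' \in G_b$, I would choose paths $\alpha$ from $a$ to $a'$ and $\beta$ from $b$ to $b'$ inside the respective path components, form the product path $t \mapsto \mu(\alpha(t),\beta(t))$, and observe that it is continuous precisely because $\mu$ is; it runs from $ab$ to $a'b'$, so $a'b' \in G_{ab}$. This inclusion makes the rule $G_a * G_b := G_{ab}$ well defined, since independence of representatives is exactly what it provides, and associativity together with the unit $G_e$ are inherited verbatim from $G$, so $q$ becomes a morphism of monoids. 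When $G$ is a group I would \emph{not} try to show directly that inverses stay in one path component; instead I apply part~\ref{enum:top_monoid:homo} to the now-available surjective morphism $q:G \to \pi_0(G,e)$ to conclude that $\pi_0(G,e)$ is a group, whence $G_{a^{-1}} = G_a^{-1}$ follows from $q$ being a homomorphism. From this, closure of $G_e$ under products and inverses, its normality via $q(gag^{-1}) = q(g)*G_e*q(g)^{-1} = G_e$, and the coset description $G_a = aG_e = G_e a$ all drop out by applying $q$ and comparing path components, giving $\pi_0(G,e) \cong G/G_e$.

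Finally, part~\ref{enum:top_monoid:morphism_of_monoids} repeats the well-definedness pattern: continuity of $\ahom$ sends a path in $G$ to a path in $H$, so $\ahom_0(G_a) := H_{\ahom(a)}$ is well defined, and $\ahom(ab) = \ahom(a)\ahom(b)$ together with the definition of $*$ gives $\ahom_0(G_a * G_b) = \ahom_0(G_a) * \ahom_0(G_b)$; the unit is preserved since $\ahom(e_G)=e_H$. The group case needs no extra work, because by part~\ref{enum:top_monoid:homo} any monoid morphism between groups automatically respects inverses. The only genuine subtlety throughout is the reliance on continuity of $\mu$ in forming product paths; continuity of inversion is never invoked, which is why the group-theoretic facts in part~\ref{enum:top_monoid:Ge} are routed through part~\ref{enum:top_monoid:homo} rather than proved by hand.
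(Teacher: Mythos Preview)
Your proposal is correct and aligns with the paper's approach. The paper proves only the key inclusion $G_aG_b\subset G_{ab}$, arguing that $G_a\times G_b$ is path connected, hence so is its image under the continuous map $\mu$, and since this image contains $ab$ it must lie in $G_{ab}$; your explicit product-path construction $t\mapsto\mu(\alpha(t),\beta(t))$ is a concrete unpacking of that same idea. The paper then declares all remaining statements ``easy'' and omits them, so your detailed treatment of parts~\ref{enum:top_monoid:homo} and~\ref{enum:top_monoid:morphism_of_monoids}, and in particular your device of routing the group assertions in~\ref{enum:top_monoid:Ge} through part~\ref{enum:top_monoid:homo} applied to the surjection $q:G\to\pi_0(G,e)$ (thereby avoiding any appeal to continuity of inversion), is exactly the kind of argument the authors had in mind but chose not to write out.
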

\begin{proof}
\ref{enum:top_monoid:Ge}
Since by definition the sets $G_a$ and $G_b$ are path connected, their product $G_a G_b := \mu(G_a\times G_b)$ is path connected as well.
Moreover, as it contains $ab$, it follows that $\mu(G_a\times G_b) \subset G_{ab}$.

All other statements of lemma are easy we leave them for the reader.
\end{proof}
%
%, just mentioning the principal observations
%
%\ref{enum:top_monoid:homo}
%We have that $q(a) q(a^{-1}) = q(a a^{-1})=  q(e_G) = e_H$.
%
%
%
%%All other statements are easy.
%
%\ref{enum:top_monoid:when_G_is_group}
%For each $a\in G$ let $L_a: G \to G$ be the left multiplication by $a$ defined by $L_a(x) = ax$.
%Then $L_a$ is a homeomorphism of $G$ with inverse $L_{a^{-1}}$.
%Since $L_a(e) =a$, $L_a$ homeomorphically maps $G_e$ onto $G_a$, that is $G_a = a G_e$.
%Similar arguments hold for right multiplication.
%All other statements are left for the reader.
%
%
%\ref{enum:top_monoid:morphism_of_monoids}
%
%\end{proof}

\subsection*{Locally compact spaces}
Let $\Xsp$ be a topological space.
\begin{definition}\label{def:loc_comp}
We will say that $\Xsp$ is \myemph{locally compact} if every point $x\in\Xsp$ has a local base consisting of compact neighborhoods.
In other words, for every open $U$ containing $x$, there exists a compact set $K$ such that $x\in \Int{K} \subset K \subset U$.
\end{definition}

\begin{remark}
There are some other approaches for defining local compactness.
For example:
\begin{enumerate}[label={\rm(\arabic*)}]
\item\label{enum:loc_comp:comp_nbh}
every point $x\in\Xsp$ has a compact neighborhood, i.e.\! there is a compact set $K$ such that $x\subset \Int{K}$;
\item\label{enum:loc_comp:closed_comp_nbh}
every point $x\in\Xsp$ has a \myemph{closed} compact neighborhood.
\end{enumerate}
It is well known and is easy to check that~\ref{enum:loc_comp:closed_comp_nbh} is not equivalent to Definition~\ref{def:loc_comp}, and each of these definitions implies~\ref{enum:loc_comp:comp_nbh}.
Moreover, all these definitions coincide for Hausdorff spaces.
In the present paper a local compactness is always used in the sense of Definition~\ref{def:loc_comp}.
%In what follows we will study the spaces of leaves of foliations which are in general non Hausdorff.
%We will show that many properties of compact open topologies on the spaces of continuous maps between locally compact Hausdorff topological spaces are preserved if we omit Hausdorff property but assume local compactness in the sense of Definition~\ref{def:loc_comp}.
\end{remark}

\begin{lemma}\label{lm:loc_comp_prop}
Let $\Xsp$ be a locally compact topological space in the sense of Definition~\ref{def:loc_comp}.
\begin{enumerate}[label={\rm(\arabic*)}, leftmargin=*]
\item\label{enum:loc_comp:nbh_of_compact}
For every compact $\Ksp$ and open $\Usp$ in $\Xsp$ with $\Ksp \subset \Usp$ there exists a compact $\Lsp$ such that $\Ksp \subset \Int{\Lsp} \subset\Lsp \subset \Usp$.

\item\label{enum:loc_comp:pres_under_open}
Let $\prj:\Xsp\to\Ysp$ be an open surjective map.
Then $\Ysp$ is also locally compact.
\end{enumerate}
\end{lemma}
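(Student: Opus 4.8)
The plan is to treat the two statements separately, in both cases arguing directly from Definition~\ref{def:loc_comp} (a local base of compact neighborhoods) rather than from any Hausdorff-flavored reformulation, since in the present setting compact subsets need not be closed and one cannot simply pass to closures.

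For statement \ref{enum:loc_comp:nbh_of_compact} I would run a standard compactness-covering argument. Fix a compact $\Ksp$ and an open $\Usp\supset\Ksp$. For each $x\in\Ksp$, apply Definition~\ref{def:loc_comp} to the open neighborhood $\Usp$ of $x$ to obtain a compact set $\Lsp_x$ with $x\in\Int\Lsp_x\subset\Lsp_x\subset\Usp$. The open sets $\{\Int\Lsp_x\}_{x\in\Ksp}$ cover $\Ksp$, so by compactness finitely many of them, say indexed by $x_1,\dots,x_n$, already cover $\Ksp$. I would then set $\Lsp:=\bigcup_{i=1}^{n}\Lsp_{x_i}$. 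This $\Lsp$ is compact as a finite union of compact sets, and $\Lsp\subset\Usp$ since each $\Lsp_{x_i}\subset\Usp$. To obtain $\Ksp\subset\Int\Lsp$, I would observe that each $\Int\Lsp_{x_i}$ is open and contained in $\Lsp$, hence contained in the largest open subset $\Int\Lsp$; therefore $\Ksp\subset\bigcup_i\Int\Lsp_{x_i}\subset\Int\Lsp$, giving $\Ksp\subset\Int\Lsp\subset\Lsp\subset\Usp$ as required.

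For statement \ref{enum:loc_comp:pres_under_open} I would verify the defining property of local compactness for $\Ysp$ directly, using all three hypotheses on $\prj$. Fix $y\in\Ysp$ and an open $\Vsp\ni y$. By surjectivity choose $x\in\Xsp$ with $\prj(x)=y$; by continuity $\prj^{-1}(\Vsp)$ is an open neighborhood of $x$. Applying part~\ref{enum:loc_comp:nbh_of_compact} (or Definition~\ref{def:loc_comp}) in $\Xsp$ yields a compact $\Ksp$ with $x\in\Int\Ksp\subset\Ksp\subset\prj^{-1}(\Vsp)$. I would then take $\Msp:=\prj(\Ksp)$: it is compact as the continuous image of a compact set, and $\Msp\subset\prj(\prj^{-1}(\Vsp))\subset\Vsp$. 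Finally $\prj(\Int\Ksp)$ is open in $\Ysp$ because $\prj$ is an open map, it contains $y=\prj(x)$, and it is contained in $\Msp$; hence $y\in\prj(\Int\Ksp)\subset\Int\Msp\subset\Msp\subset\Vsp$, which is exactly what Definition~\ref{def:loc_comp} demands.

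Neither argument is long, and I do not anticipate a serious obstacle; the difficulty is entirely one of care in the non-Hausdorff setting. In \ref{enum:loc_comp:nbh_of_compact} the point is to resist the usual shortcut of shrinking to a closed compact neighborhood and instead to assemble $\Lsp$ out of the compact neighborhoods that Definition~\ref{def:loc_comp} itself supplies. In \ref{enum:loc_comp:pres_under_open} the subtlety is simply to make sure each hypothesis on $\prj$ is genuinely used — surjectivity to lift $y$ to some $x$, continuity both to pull back $\Vsp$ and to guarantee that $\prj(\Ksp)$ is compact, and openness to turn $\Int\Ksp$ into an open set whose image is an interior neighborhood of $y$.
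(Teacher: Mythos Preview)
Your proof is correct and follows essentially the same approach as the paper: for \ref{enum:loc_comp:nbh_of_compact} you cover $\Ksp$ by interiors of compact neighborhoods $\Lsp_x$ supplied by Definition~\ref{def:loc_comp}, extract a finite subcover, and take $\Lsp$ to be the union; for \ref{enum:loc_comp:pres_under_open} you lift $y$ to $x$, pull back $\Vsp$, find a compact neighborhood $\Ksp$ of $x$ inside $\prj^{-1}(\Vsp)$, and push forward, using openness of $\prj$ to see that $\prj(\Int\Ksp)$ is an open neighborhood of $y$ inside $\prj(\Ksp)$. Your write-up is in fact a bit more explicit than the paper's (e.g.\ spelling out why $\bigcup_i\Int\Lsp_{x_i}\subset\Int\Lsp$), but the argument is the same.
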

\begin{proof}
\ref{enum:loc_comp:nbh_of_compact}
For each $x\in\Ksp$ there exists a compact subset $\Lsp_x$ such that $x\in \Int{\Lsp_x} \subset \Lsp_x \subset \Usp$.
Due to compactness of $\Ksp$ one can find finitely many points $x_1,\ldots,x_n\in\Ksp$ such that $\Ksp\subset\mathop{\cup}\limits_{i=1}^{n} \Int{\Lsp_{x_i}}$.
Put $\Lsp = \mathop{\cup}\limits_{i=1}^{n} \Lsp_{x_i}$.
Then $\Lsp$ is compact and $\Ksp \subset \Int{\Lsp} \subset\Lsp \subset \Usp$.

\ref{enum:loc_comp:pres_under_open}
Let $y\in\Ysp$ and $\Vsp$ be an open neighborhood of $y$.
We should find a compact subset $\Lsp \subset \Ysp$ with $y\in\Int{\Lsp} \subset\Lsp \subset \Vsp$.

Fix a point $x\in \Xsp$ with $\prj(x)=y$, and let $\Usp =\prj^{-1}(\Vsp)$.
Since $\Xsp$ is locally compact there exists a compact $K\subset \Xsp$ such that $x \in \Int{K} \subset K\subset \Usp$.
Then $\Lsp = \prj(\Ksp)$ is compact.
Moreover, as $\prj$ is open, $\prj(\Int{K})$ is an open neighborhood of $y$ contained in $\Lsp$, and so $y\in \prj(\Int{\Ksp}) \subset \Int{\Lsp} \subset \Lsp \subset \Vsp$.
Thus $\Ysp$ is locally compact.
\end{proof}
%
%our purposes we will use
%Recall that a Hausdorff topological space $\Xsp$ is locally compacti
%when considering locally compact spaces one usually assume that they are Hausdorff.
%Usually a locally compact \myemph{Hausdorff} topological space is a topolo

\newcommand\CXY{C(\Xsp,\Ysp)}
\newcommand\CXZ{C(\Xsp,\Zsp)}
\newcommand\CYZ{C(\Ysp,\Zsp)}
\newcommand\NKU[2]{\mathcal{N}(#1,#2)}
\newcommand\NKUind[3]{\mathcal{N}_{#1}(#2,#3)}
\newcommand\CYY{C(\Ysp,\Ysp)}

\subsection*{Compact open topologies}
Let $\Xsp, \Ysp$ be topological spaces and $\CXY$ be the set of all continuous maps $f:\Xsp\to\Ysp$.
For every compact $K\subset \Xsp$ and open $U\subset \Ysp$ put
\[
\NKU{\Ksp}{\Usp} = \{ f\in\CXY \mid f(\Ksp) \subset \Usp\}.
\]
Then the \myemph{compact open} topology on $\CXY$ is the topology generated by the prebase consisting of sets $\NKU{\Ksp}{\Usp}$, where $\Ksp$ runs over all compact subsets of $\Xsp$ and $\Usp$ runs over all open subsets of $\Ysp$.

For a point $y\in\Ysp$ we will denote by $\ev{y}:\CXY\to\Ysp$ the ``evaluation map at $y$'' defined by $\ev{y}(f) = f(y)$.

\begin{lemma}\label{lm:comp_open_top}
Let $\Xsp,\Ysp,\Zsp$ be topological spaces.
\begin{enumerate}[label={\rm(\arabic*)}, leftmargin=*, itemsep=1ex]
\item\label{enum:comp_open_top:ev}
For each $y\in\Ysp$ the evaluation map $\ev{y}:\CXY\to\Ysp$ is continuous.

\item\label{enum:comp_open_top:composition}
Let $\mu: \CXY\times \CYZ \to \CXZ$ be the composition map defined by $\mu(f,g) = g\circ f$.
If $\Ysp$ is locally compact, then $\mu$ is continuous with respect to compact open topologies on those spaces.

\item\label{enum:comp_open_top:top_monoid}
Suppose again that $\Ysp$ is locally compact.
Then $\CYY$ is a topological monoid, that is $\mu$ is continuous with respect to compact open topologies on those spaces.
Hence, due to Lemma~\ref{lm:top_monoid}, $\pi_0\CYY$ is a monoid, while $\pi_0\HY$ is a group.
\end{enumerate}
\end{lemma}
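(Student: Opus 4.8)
The plan is to prove continuity in each part by checking only that preimages of the subbasic sets $\NKU{\Ksp}{\Usp}$ are open, and to treat the three parts in increasing order of difficulty; part~\ref{enum:comp_open_top:composition} carries essentially all the content, and parts~\ref{enum:comp_open_top:ev} and~\ref{enum:comp_open_top:top_monoid} follow quickly from it. For part~\ref{enum:comp_open_top:ev}, the key observation is that a one-point set is compact. Hence for every open $\Usp\subset\Ysp$ the preimage $\ev{y}^{-1}(\Usp)=\{f\mid f(y)\in\Usp\}$ coincides with the subbasic open set $\NKU{\{y\}}{\Usp}$, so $\ev{y}$ is continuous.

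For part~\ref{enum:comp_open_top:composition} I would fix a pair $(f,g)$ with $\mu(f,g)=g\circ f\in\NKU{\Ksp}{\Wsp}$, where $\Ksp\subset\Xsp$ is compact and $\Wsp\subset\Zsp$ is open, and exhibit a basic open neighborhood of $(f,g)$ that $\mu$ carries into $\NKU{\Ksp}{\Wsp}$. From $g(f(\Ksp))\subset\Wsp$ we get $f(\Ksp)\subset g^{-1}(\Wsp)$, where $g^{-1}(\Wsp)$ is open (as $g$ is continuous) and $f(\Ksp)$ is compact (as a continuous image of a compact set). This is precisely where local compactness of $\Ysp$ is used: by Lemma~\ref{lm:loc_comp_prop}\ref{enum:loc_comp:nbh_of_compact} there is a compact $\Lsp\subset\Ysp$ with $f(\Ksp)\subset\Int{\Lsp}\subset\Lsp\subset g^{-1}(\Wsp)$. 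Then $\NKU{\Ksp}{\Int{\Lsp}}\times\NKU{\Lsp}{\Wsp}$ is an open neighborhood of $(f,g)$, and for any $(f',g')$ in it one has $f'(\Ksp)\subset\Int{\Lsp}\subset\Lsp$, whence $g'(f'(\Ksp))\subset g'(\Lsp)\subset\Wsp$, i.e. $\mu(f',g')\in\NKU{\Ksp}{\Wsp}$. Thus $\mu^{-1}(\NKU{\Ksp}{\Wsp})$ is open and $\mu$ is continuous.

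Part~\ref{enum:comp_open_top:top_monoid} is then the special case $\Xsp=\Ysp=\Zsp$ of part~\ref{enum:comp_open_top:composition}, which makes $\CYY$ a topological monoid under composition with unit $\id_{\Ysp}$; restricting $\mu$ to $\HY\times\HY$ shows that $\HY$ is a topological monoid as well. Applying Lemma~\ref{lm:top_monoid}\ref{enum:top_monoid:Ge} endows $\pi_0\CYY$ with a monoid structure, and since $\HY$ is algebraically a group, the same lemma identifies $\pi_0(\HY,\id)$ with the quotient of $\HY$ by the path component of $\id$, which is a group. The only nontrivial step, and hence the main point to watch, is the ``fattening'' of $f(\Ksp)$ to a compact neighborhood $\Lsp$ inside $g^{-1}(\Wsp)$: the classical proof of continuity of composition invokes Hausdorff local compactness, whereas here $\Ysp$ need not be Hausdorff. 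The resolution is that Definition~\ref{def:loc_comp} together with Lemma~\ref{lm:loc_comp_prop}\ref{enum:loc_comp:nbh_of_compact} supplies exactly a compact $\Lsp$ with $f(\Ksp)\subset\Int{\Lsp}\subset\Lsp\subset g^{-1}(\Wsp)$, so the argument goes through verbatim with no separation axiom required.
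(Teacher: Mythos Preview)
Your proof is correct and follows essentially the same route as the paper: the identification $\ev{y}^{-1}(\Usp)=\NKU{\{y\}}{\Usp}$ for part~\ref{enum:comp_open_top:ev}, the ``fattening'' of $f(\Ksp)$ to a compact $\Lsp$ with $f(\Ksp)\subset\Int\Lsp\subset\Lsp\subset g^{-1}(\Wsp)$ via Lemma~\ref{lm:loc_comp_prop}\ref{enum:loc_comp:nbh_of_compact} for part~\ref{enum:comp_open_top:composition}, and specialization to $\Xsp=\Ysp=\Zsp$ plus Lemma~\ref{lm:top_monoid} for part~\ref{enum:comp_open_top:top_monoid}. Your added remark that only Definition~\ref{def:loc_comp}, not Hausdorffness, is needed for the fattening step is in line with the paper's intent.
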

\begin{proof}
\ref{enum:comp_open_top:ev}
Let $\Vsp \subset \Ysp$ be an open subset.
Then $\ev{y}^{-1}(\Vsp) = \NKU{\{y\}}{\Vsp}$.
Indeed, $g\in\CXY$ belongs to $\NKU{\{y\}}{\Vsp}$ iff $\ev{y}(g) = g(y)\in\Vsp$.
Hence the inverse image of each open set in $\Ysp$ is open in $\CXY$ and so $\ev{y}$ is continuous.

\ref{enum:comp_open_top:composition}
Let $f\in\CXY$, $g\in\CYZ$, and $\NKU{\Ksp}{\Wsp}$ be an open prebase neighborhood of $g\circ f$ in $\CXZ$, where $\Ksp \subset \Xsp$ is a compact and $\Wsp\subset\Zsp$ is open.
Denote $\Lsp = f(\Ksp)$ and $\Vsp = g^{-1}(\Wsp)$.
Then $\Lsp$ is compact, $\Vsp$ is open and $\Lsp \subset \Vsp$.
Hence by~\ref{enum:loc_comp:nbh_of_compact} of Lemma~\ref{lm:loc_comp_prop} there exists a compact set $\Msp \subset \Ysp$ such that $\Lsp\subset  \Int{\Msp} \subset \Msp\subset \Vsp$.

Then $\NKU{\Ksp}{\Int{\Msp}}$ is an open neighborhood of $f$ in $\CXY$, and $\NKU{\Lsp}{\Wsp}$ is an open neighborhood of $g$ in $\CYZ$.
Moreover, we claim that
\[
\mu\bigl(\,\NKU{\Ksp}{\Int{\Msp}} \times \NKU{\Msp}{\Wsp}\,\bigr) \ \subset \ \NKU{\Ksp}{\Wsp}.
\]
Indeed, if $f'\in \NKU{\Ksp}{\Int{\Msp}}$ and $g'\in\NKU{\Msp}{\Wsp}$, then
\[  g'\circ f'(\Ksp) \subset g'(\Int{\Msp}) \subset g'(\Msp) \subset \Wsp,\]
that is $\mu(f', g') = g'\circ f' \in \NKU{\Ksp}{\Wsp}$.

\medskip

Statement~\ref{enum:comp_open_top:top_monoid} is a direct consequence of~\ref{enum:comp_open_top:composition} when $\Xsp=\Ysp=\Zsp$ and Lemma~\ref{lm:top_monoid}.
\end{proof}

\section{Maps consistent with a partition}\label{sect:arb_partitions}
Let $\prj:\Xsp\to\Ysp$ be a \myemph{factor} map between topological spaces, that is $\prj$ is surjective and a subset $A\subset \Ysp$ is open if and only if $\prj^{-1}(A)$ is open in $\Xsp$.
In other words, $\Ysp$ has the strongest topology in which $\prj$ is continuous.
It is well known and is easy to see that every \myemph{open} and every \myemph{closed} map is factor.

Let $\Partition = \{\prj^{-1}(y) \mid y\in \Ysp \}$ be the partition of $\Xsp$ into the inverse images of points of $\Ysp$.
A continuous map $\dif:\Xsp\to\Xsp$ will be called a \myemph{$\Partition$-map} if for each $\leaf\in\Partition$ its image $\dif(\leaf)$ is contained in some element $\leaf'$ of $\Partition$.
Hence every $\Partition$-map $\dif$ induces a map $\ahom(\dif):\Ysp\to\Ysp$ making commutative the following diagram:
\begin{equation}\label{equ:psi_diagram1}
\begin{CD}
\Xsp @>{\dif}>> \Xsp \\
@V{\prj}VV  @VV{\prj}V \\
\Ysp @>{\ahom(\dif)}>> \Ysp
\end{CD}
\end{equation}

The following well known and easy lemma implies that $\ahom(\dif)$ is continuous whenever $\dif$ is so.

\begin{lemma}\label{lm:f_pg}
Suppose we have the following commutative diagram
\begin{equation}\label{equ:fpg_diagram}
\xymatrix{
	\Xsp \ar[rd]^-{f} \ar[d]_-{\prj} \\
	\Ysp \ar[r]_-{g} & Z
}
\end{equation}
in which $f$ is continuous and $\prj$ is a factor map.
Then $g$ is continuous as well.
\end{lemma}
\begin{proof}
We should show that for each open $A\subset Z$ its inverse $g^{-1}(A)$ is open in $\Ysp$ as well.
As $\prj$ is a factor map, the latter is equivalent to the assumption that $\prj^{-1}(g^{-1}(A))$ is open in $\Xsp$.
But $\prj^{-1}(g^{-1}(A)) = f^{-1}(A)$ due to commutativity of the diagram~\eqref{equ:fpg_diagram} and this set is open as $f$ is continuous.
\end{proof}

Let $\End(\Xsp, \Partition)$ be the monoid of all $\Partition$-maps of $\Xsp$, and $\End(\Ysp)=C(\Ysp,\Ysp)$ be the monoid of all continuous self-maps of $\Ysp$.
Let also $\Homeo(\Xsp, \Partition)$ be the subgroup of $\End(\Xsp, \Partition)$ consisting of homeomorphisms and $\HY$ be the group of homeomorphisms of $\Ysp$.

Then Lemma~\ref{lm:f_pg} implies that the correspondence $\dif\mapsto\ahom(\dif)$ is a well defined map
\begin{equation}\label{equ:ahom_EXP_EY}
\ahom:\End(\Xsp, \Partition) \to \End(\Ysp)
\end{equation}
being a homomorphism of monoids.

\begin{definition}\label{def:CK}
We say that the quotient map $\prj:\Xsp\to\Ysp$ has
\begin{itemize}[leftmargin=*, itemsep=1ex]
\item
property \COMP if for every compact $\Lsp\subset\Ysp$ there exists a compact $\Ksp\subset\Xsp$ such that $\prj(\Ksp)=\Lsp$;

\item
property \CONT whenever the homomorphism $\ahom:\End(\Xsp, \Partition) \to \End(\Ysp)$ is continuous with respect to the corresponding compact open topologies of those groups.
\end{itemize}
\end{definition}

The following statement gives sufficient conditions under which $\ahom$ will be continuous with respect to compact open topologies on $\End(\Xsp, \Partition)$ and  $\End(\Ysp)$.

\begin{lemma}\label{lm:cond_for_cont_ahom}
 \COMP $\Longrightarrow$ \CONT.
%Let $\prj:\Xsp\to\Ysp$ be a factor map having the following property:
%\begin{enumerate}
%\item[\COMP]
%for every compact subset $\Lsp\subset\Ysp$ there exists a compact subset $\Ksp\subset \Xsp$ such that $\prj(\Ksp) = \Lsp$.
%\end{enumerate}
%Then the homomorphism of monoids $\ahom:\End(\Xsp, \Partition) \to \End(\Ysp)$ is continuous with respect to compact open topologies.
\end{lemma}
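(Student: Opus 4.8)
The plan is to reduce continuity of $\ahom\colon\EXP\to\EY$ to checking that the preimage of each prebase element of the compact open topology on $\EY=\CYY$ is open. Recall that such a prebase consists of the sets $\NKU{\Lsp}{\Vsp}=\{g\in\CYY\mid g(\Lsp)\subset\Vsp\}$ with $\Lsp\subset\Ysp$ compact and $\Vsp\subset\Ysp$ open. Since a map into a space with a given prebase is continuous as soon as the preimages of all prebase sets are open, it suffices to show that $\ahom^{-1}\bigl(\NKU{\Lsp}{\Vsp}\bigr)$ is open in $\EXP$ for every such $\Lsp$ and $\Vsp$.

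First I would invoke \COMP to choose a compact $\Ksp\subset\Xsp$ with $\prj(\Ksp)=\Lsp$. The next step is to compute the preimage using the defining commutativity $\prj\circ\dif=\ahom(\dif)\circ\prj$ of diagram~\eqref{equ:psi_diagram1}. For $\dif\in\EXP$ this gives $\ahom(\dif)\bigl(\prj(x)\bigr)=\prj\bigl(\dif(x)\bigr)$ for all $x\in\Xsp$, and applying this over $x\in\Ksp$ together with $\prj(\Ksp)=\Lsp$ yields the key identity
\[
\ahom(\dif)(\Lsp)=\prj\bigl(\dif(\Ksp)\bigr).
\]
Consequently $\ahom(\dif)(\Lsp)\subset\Vsp$ is equivalent to $\prj\bigl(\dif(\Ksp)\bigr)\subset\Vsp$, and hence to $\dif(\Ksp)\subset\prj^{-1}(\Vsp)$. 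Writing $\Wsp=\prj^{-1}(\Vsp)$, which is open in $\Xsp$ because $\prj$ is a factor map and therefore continuous, I obtain
\[
\ahom^{-1}\bigl(\NKU{\Lsp}{\Vsp}\bigr)=\NKU{\Ksp}{\Wsp}\cap\EXP.
\]

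This last set is exactly a prebase element of the compact open topology on $\EXP$ (the subspace topology inherited from $C(\Xsp,\Xsp)$), so it is open, which completes the argument. The only genuinely substantive point is the key identity, and within it the implication needed for the inclusion ``$\supset$'' of the displayed preimage: here the \emph{surjectivity} $\prj(\Ksp)=\Lsp$ is essential, since to conclude $\ahom(\dif)(\Lsp)\subset\Vsp$ from $\dif(\Ksp)\subset\Wsp$ one must realize each $y\in\Lsp$ as $y=\prj(x)$ with $x\in\Ksp$. This is precisely the role played by \COMP, and it is the step where the hypothesis is used; the reverse inclusion only requires $\prj(\Ksp)\subset\Lsp$. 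I expect no further obstacles, as everything else is formal manipulation with prebase sets.
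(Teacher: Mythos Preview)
Your proof is correct and follows essentially the same route as the paper's: both use \COMP to lift a compact $\Lsp\subset\Ysp$ to a compact $\Ksp\subset\Xsp$ with $\prj(\Ksp)=\Lsp$, set $\Usp=\prj^{-1}(\Vsp)$, and then use the commutativity $\prj\circ\dif=\ahom(\dif)\circ\prj$ to conclude that $\NKU{\Ksp}{\Usp}\cap\EXP$ is carried by $\ahom$ into $\NKU{\Lsp}{\Vsp}$. The only cosmetic difference is that you compute the exact equality $\ahom^{-1}\bigl(\NKU{\Lsp}{\Vsp}\bigr)=\NKU{\Ksp}{\Wsp}\cap\EXP$, whereas the paper argues continuity at each point by exhibiting the containment $\ahom\bigl(\NKU{\Ksp}{\Usp}\bigr)\subset\NKU{\Lsp}{\Vsp}$; these are the same argument.
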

\begin{proof}
Recall that the prebase of compact open topology on $\EY$ consists of sets
\[ \NKU{\Lsp}{\Vsp} = \{ \kdif\in\EY \mid \kdif(\Lsp) \subset \Vsp \},\] where $\Lsp\subset \Ysp$ is compact and $\Vsp \subset \Ysp$ is open.
Let $\dif \in \End(\Xsp, \Partition)$ and $\NKU{\Lsp}{\Vsp}$ be any prebase neighborhood of $\ahom(\dif)$ in $\EY$, so $\ahom(\dif)(\Lsp)\subset \Vsp$.
Fix any compact $\Ksp\subset \Xsp$ with $\prj(\Ksp)=\Lsp$ and put $\Usp = \prj^{-1}(\Vsp)$.
Then
\[
\dif(\Ksp)  \subset \dif(\prj^{-1}(\Lsp)) \subset \prj^{-1}(\ahom(\dif)(\Lsp)) \subset \prj^{-1}(\Vsp) = \Usp,
\]
so $\dif\in \NKU{\Ksp}{\Usp}$.
Moreover, we claim that in fact $\ahom(\NKU{\Ksp}{\Usp}) \subset \NKU{\Lsp}{\Vsp}$, which will imply continuity of $\ahom$ at $\dif$.

Indeed, let $g\in \NKU{\Ksp}{\Usp}$, so $g(\Ksp) \subset \Usp$.
Then
\[
\ahom(g)(\Lsp) =
\ahom(g)\bigl( \prj(\Ksp) \bigr)=
\prj \bigl( g(\Ksp) \bigr) \subset \prj(\Usp) = \Vsp.
\]
Thus $\ahom(g)\in \NKU{\Lsp}{\Vsp}$, and so $\ahom$ is continuous.
\end{proof}

%Recall that a continuous map $\prj:\Xsp\to\Ysp$
%\begin{itemize}[itemsep=1ex] %leftmargin=*]
%\item is called \myemph{proper} if $\prj^{-1}(\Lsp)$ is compact for each compact $\Lsp\subset \Ysp$;
%\item \myemph{admits local cross-sections} if for every $y\in\Ysp$ there exists an open neighborhood $\Vsp$ and a continuous map $f:\Vsp\to\Xsp$ such that $\prj\circ f = \id_{\Vsp}$.
%\end{itemize}
%In particular, if $\prj$ is a locally trivial fibration, then it admi and $\Ysp$ is Hausdorff.

%\begin{lemma}
%Suppose
%\end{lemma}

\begin{corollary}\label{cor:cond_for_cont_ahom}
Suppose $\Ysp$ is a locally compact Hausdorff space and $\prj:\Xsp\to\Ysp$ be a surjective continuous map.
Then each of the following conditions implies condition \CONT for $\prj$:
%that the map $\ahom:\End(\Xsp, \Partition) \to \End(\Ysp)$ is continuous with respect to compact open topologies:
\begin{enumerate}[label={\rm(\alph*)}, itemsep=1ex]
\item\label{enum:condstar:proper}
$\prj$ is a \myemph{proper} map, i.e. $\prj^{-1}(\Lsp)$ is compact for each compact $\Lsp\subset \Ysp$
\item\label{enum:condstar:loc_cross_sect}
$\prj$ is an open map and \myemph{admits local cross sections}, i.e. for every $y\in\Ysp$ there exists an open neighborhood $\Vsp$ and a continuous map $f:\Vsp\to\Xsp$ such that $\prj\circ f = \id_{\Vsp}$;
\item\label{enum:condstar:loc_triv_fibr}
$\prj$ is a locally trivial fibration.
%	 that is for every $y\in\Ysp$ there exists an open neighborhood $\Vsp$ and a continuous map $f:\Usp\to\Xsp$ such that $\prj\circ f = \id_{\Usp}$.
\end{enumerate}
\end{corollary}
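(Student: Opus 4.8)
The plan is to show that in each of the three cases the map $\prj$ has property~\COMP, and then to conclude~\CONT from the already established implication \COMP $\Rightarrow$ \CONT of Lemma~\ref{lm:cond_for_cont_ahom}. Before doing so I would note that in every case $\prj$ is in particular a \emph{factor} map, so that $\ahom$ and the properties~\COMP,~\CONT are meaningful: in~\ref{enum:condstar:loc_cross_sect} and~\ref{enum:condstar:loc_triv_fibr} the map $\prj$ is open by hypothesis, and an open surjection is factor; in~\ref{enum:condstar:proper} a proper map into a locally compact Hausdorff space is closed (a standard fact, proved by lifting a compact neighborhood of a point outside $\prj(A)$), hence again factor. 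Thus everything reduces to verifying~\COMP, i.e. that every compact $\Lsp\subset\Ysp$ equals $\prj(\Ksp)$ for some compact $\Ksp\subset\Xsp$.

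Case~\ref{enum:condstar:proper} is immediate. Given a compact $\Lsp\subset\Ysp$, put $\Ksp=\prj^{-1}(\Lsp)$; this set is compact because $\prj$ is proper, and $\prj(\Ksp)=\prj(\prj^{-1}(\Lsp))=\Lsp$ since $\prj$ is surjective. Hence~\COMP holds.

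Case~\ref{enum:condstar:loc_cross_sect} is the main part. Fix a compact $\Lsp\subset\Ysp$. For each $y\in\Lsp$ I would choose an open neighborhood $\Vsp_y$ carrying a continuous section $f_y:\Vsp_y\to\Xsp$ with $\prj\circ f_y=\id_{\Vsp_y}$, and then, using local compactness of $\Ysp$ (Definition~\ref{def:loc_comp}), a compact neighborhood $\Msp_y$ with $y\in\Int{\Msp_y}\subset\Msp_y\subset\Vsp_y$. The interiors $\{\Int{\Msp_y}\}_{y\in\Lsp}$ cover the compact set $\Lsp$, so finitely many, say $\Int{\Msp_{y_1}},\dots,\Int{\Msp_{y_n}}$, already do. Set $\Lsp_i=\Lsp\cap\Msp_{y_i}$; each $\Lsp_i$ is compact (a closed subset of the compact $\Lsp$, since $\Msp_{y_i}$ is compact, hence closed in the Hausdorff space $\Ysp$), satisfies $\Lsp_i\subset\Vsp_{y_i}$, and $\bigcup_{i=1}^{n}\Lsp_i=\Lsp$. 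Now put $\Ksp=\bigcup_{i=1}^{n} f_{y_i}(\Lsp_i)$: as a finite union of continuous images of compact sets it is compact, and using $\prj\circ f_{y_i}=\id$ on $\Vsp_{y_i}\supset\Lsp_i$ one gets $\prj(\Ksp)=\bigcup_{i=1}^{n}(\prj\circ f_{y_i})(\Lsp_i)=\bigcup_{i=1}^{n}\Lsp_i=\Lsp$. Thus~\COMP holds.

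Case~\ref{enum:condstar:loc_triv_fibr} reduces to the previous one: a locally trivial fibration is an open map, and over each trivializing neighborhood $\Vsp$, where $\prj^{-1}(\Vsp)\cong\Vsp\times F$ with $F\neq\emptyset$ by surjectivity of $\prj$, a choice of a point $f_0\in F$ yields a continuous local section $y\mapsto(y,f_0)$; hence~\ref{enum:condstar:loc_triv_fibr}$\Rightarrow$\ref{enum:condstar:loc_cross_sect} and~\COMP follows. I expect the only genuinely delicate point to be the patching in case~\ref{enum:condstar:loc_cross_sect}: a local section is defined only over its own neighborhood $\Vsp_y$, so one cannot lift $\Lsp$ by a single section and must instead cover $\Lsp$ by finitely many compact pieces $\Lsp_i$ lying inside the respective section domains and glue the finitely many compact lifts $f_{y_i}(\Lsp_i)$. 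It is precisely here that both the local compactness and the Hausdorff property of $\Ysp$ are used.
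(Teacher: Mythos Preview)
Your proof is correct and follows essentially the same approach as the paper: verify that $\prj$ is a factor map, then establish~\COMP in each case and invoke Lemma~\ref{lm:cond_for_cont_ahom}. The only cosmetic difference is that in case~\ref{enum:condstar:loc_cross_sect} you shrink each section domain to a compact neighborhood $\Msp_y\subset\Vsp_y$, whereas the paper simply assumes (after shrinking) that the $\Vsp_y$ themselves are compact; the resulting arguments are identical.
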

\begin{proof}
Notice that~\ref{enum:condstar:loc_triv_fibr} is a particular case of~\ref{enum:condstar:loc_cross_sect}.
In the remaining cases~\ref{enum:condstar:proper} and~\ref{enum:condstar:loc_cross_sect} it suffices to check that $\prj$ is a factor map and that condition~\COMP of Lemma~\ref{lm:cond_for_cont_ahom} holds true.

\ref{enum:condstar:proper}
It is well known and is easy to check that a proper map $\prj$ onto a locally compact Hausdorff space $\Ysp$ is closed, i.e. $\prj(F)$ is closed in $\Ysp$ for each  closed $F\subset \Xsp$.
In particular, $\prj$ is a factor map.

Moreover, suppose $\Lsp \subset \Ysp$ is a compact subset.
As $\prj$ is proper, the set $\Ksp = \prj^{-1}(\Lsp)$ is compact.
Moreover, $\prj(\Ksp)=\Lsp$ since $\prj$ is also surjective.

\ref{enum:condstar:loc_cross_sect}
Since $\prj$ is open, it is also a factor map.

Furthermore, let $\Lsp \subset \Ysp$ be a compact subset.
For each $y\in\Lsp$ choose a neighborhood $\Vsp_y$ and a cross section $f_y:\Vsp_y\to\Xsp$, i.e.\! a continuous map satisfying $\prj\circ f_y = \id_{\Vsp_y}$.
As $\Ysp$ is locally compact, one can assume that $\Vsp_y$ is compact as well.
Due to compactness of $\Lsp$ one can find finitely many points $y_1,\ldots,y_n$ such that $\Lsp \subset \mathop{\cup}\limits_{i=1}^{n} \Vsp_{y_i}$.
Moreover, as $\Ysp$ is Haussdorf, $\Lsp_i = \Lsp\cap \Vsp_{y_i}$ is compact, whence $\Ksp_i:= f_{y_i}(\Lsp_i)$ is compact in $\Xsp$.
Put $\Ksp = \mathop{\cup}\limits_{i=1}^{n} \Ksp_i$.
Then $\Ksp$ is compact and
\[
\prj(\Ksp) =  \prj\Bigl( \mathop{\cup}\limits_{i=1}^{n} \Ksp_i \Bigr) =
\mathop{\cup}\limits_{i=1}^{n} \prj(\Ksp_i) =
\mathop{\cup}\limits_{i=1}^{n} \prj \circ f(\Lsp_i) =
\mathop{\cup}\limits_{i=1}^{n} \Lsp_i = \Lsp.
\]
Corollary is proved.
\end{proof}

Notice that the proof of Corollary~\ref{cor:cond_for_cont_ahom} is heavily based on the assumption that $\Ysp$ is locally compact Hausdorff.
In the next section we will release those conditions by allowing $\Ysp$ to be ``Hausdorff'' except for some locally finite subset, see Theorem~\ref{th:fK_L}.

%
%
%\begin{lemma}
%Let $\Xsp$ be a topological space, $x\in\Xsp$, $\End(\Xsp)$ be the space of all continuous self-map of $\Xsp$, and $\ev{x}:\End(\Xsp)\to\Xsp$ be the evaluation map at point $x$, that is $\ev{x}(\dif)=\dif(x)$.
%Then $\ev{x}$ is continuous from compact open topology of $\End(\Xsp)$.
%\end{lemma}
%\begin{proof}
%Let $\dif\in\End(\Xsp)$ and $\Vsp$ be an open neighborhood of $\dif(x)$ in $\Xsp$.
%Then $N_{x,\Vsp} = \{ g \in \End(\Xsp) \mid g(x) \in \Vsp\}$ be a neighborhood of $\dif$, and $\ev{x}(N_{x,\Vsp}) \subset \Vsp$.
%Thus $\ev{x}$ is continuous at $\dif$.
%\end{proof}

\section{Branch points of $T_1$ spaces}\label{sect:branch_points}
% In this section we will study locally finite subsets in $T_1$-spaces.
% The main result is Theorem~\ref{th:fK_L}

Let $\Ysp$ be a topological space.
Say that two points $y,z\in\Ysp$ are \myemph{$T_2$-disjoint} (in $\Ysp$) if they have disjoint neighborhoods.
Denote by $\hcl{y}$ the set of all $z\in \Ysp$ that are \myemph{not $T_2$-disjoint from $y$}.
Then $z\in\hcl{y}$ if and only if each neighborhood of $z$ intersects each neighborhood of $y$.
We will call $\hcl{y}$ the \myemph{Hausdorff closure} of $y$.

Evidently, $y\in\hcl{y}$.
Moreover, $y\in\hcl{z}$ if and only if $z\in\hcl{y}$.
Thus the relation $y\in\hcl{z}$ is reflexive and symmetric, however in general it is not transitive.

Following~\cite{HaefligerReeb:EM:1957} and \cite{GodbillonReeb:EM:1966} we will say that a point $y\in\Ysp$ is \myemph{\branch} whenever $\hcl{y} \setminus y \not=\emptyset$, so there are points that are not $T_2$-disjoint from $y$.
The set of all \branch\ points of $\Ysp$ will be denoted by $\specPoints$.

\begin{remark}\rm
In~\cite{MaksymenkoPolulyakh:PGC:2015, MaksymenkoPolulyakh:PGC:2016, MaksymenkoPolulyakh:MFAT:2016, MaksymenkoPolulyakhSoroka:PICG:2016, MaksymenkoPolulyakh:PGC:2017} we called those points ``\myemph{special}'', but in the present paper we decided to change their name to ``\myemph{\branch}'' as in~\cite{HaefligerReeb:EM:1957, GodbillonReeb:EM:1966} since it better reflects the structure of a space near such points.
Also in~\cite{KentMimnaJamal:IJMMS:2009} there were considered families of pairwise $T_2$-non-disjoint points called \myemph{sets of compatible appartion points}.
\end{remark}

Thus in the above notation the following conditions are equivalent:
\begin{align*}
	\, \text{(a)}~ & \text{$\Ysp$ is Hausdorff}; & \ \
	\, \text{(b)}~ & \text{$\hcl{y} =\{y\}$ for all $y\in\Ysp$;} & \ \
	\, \text{(c)}~ & \text{$\specPoints = \varnothing$.}
\end{align*}
Notice also that $\hcl{y}$ coincides with the intersection of closures of all neighborhoods of $y$:
\begin{equation}\label{equ:hcl}
\hcl{y} = \bigcap_{\text{$\Vsp$ is a neighborhood of $y$}} \overline{\Vsp}.
\end{equation}

Let now $L\subset\Ysp$ be a subset, $y\in L$, and $\hclA{y}{L}$ be the set of all points $z\in L$ that are not $T_2$-disjiont from $y$ in $L$ with respect to the topology induced from $\Ysp$.
It is straightforward that
\begin{equation}\label{equ:hclA}
\hclA{y}{L} \subseteq \hcl{y} \cap L,
\end{equation}
however the opposite inclusion can fail.

\begin{lemma}\label{lm:specpt}
For a subset $\Lsp\subset\Ysp$ the following statements hold true.
\begin{enumerate}[label={\rm(\arabic*)}, itemsep=0.5ex, leftmargin=*]
\item\label{enum:lm:specpt:Y_is_h}
If $\Lsp \cap \hcl{y} = \{y\}$ for all $y\in \Lsp$, then $\Lsp$ is Hausdorff.

\item\label{enum:lm:specpt:A_is_h}
For every $z\in \specPoints$ the following subspace $B$ of $\Ysp$ is Hausdorff:
\[
B := (\Lsp\setminus\specPoints)\cup \{z\} =
\Lsp\setminus \bigl( \specPoints \setminus \{z\} \bigr)
\]

\item\label{enum:lm:specpt:comp}
If $\Lsp$ is compact, then $\Cl{\Lsp} \subset  \Lsp \cup \specPoints$.
\end{enumerate}
\end{lemma}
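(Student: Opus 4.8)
The plan is to prove the three parts in order, using part~(1) as the main tool for part~(2). For part~(1) I would argue straight from the definitions. Let $y,z\in\Lsp$ be distinct. Since $z\in\Lsp$ and $z\neq y$, the hypothesis $\Lsp\cap\hcl{y}=\{y\}$ forces $z\notin\hcl{y}$, which by the meaning of the Hausdorff closure says exactly that $y$ and $z$ are $T_2$-disjoint in $\Ysp$; witnessing disjoint open sets $\Usp\ni y$ and $\Vsp\ni z$ then intersect $\Lsp$ in disjoint neighborhoods, so $\Lsp$ is Hausdorff. (Equivalently, by~\eqref{equ:hclA} one has $\hclA{y}{\Lsp}\subseteq\hcl{y}\cap\Lsp=\{y\}$ for every $y\in\Lsp$.)

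For part~(2) the idea is to apply part~(1) to the subspace $B=(\Lsp\setminus\specPoints)\cup\{z\}$, so it is enough to verify $B\cap\hcl{y}=\{y\}$ for each $y\in B$. I would split into two cases. If $y\in\Lsp\setminus\specPoints$, then $y$ is not a \branch\ point, so $\hcl{y}=\{y\}$ by the definition of $\specPoints$, and the equality is immediate. If $y=z$, suppose some $w\in B\cap\hcl{z}$ has $w\neq z$; then $w\in\Lsp\setminus\specPoints$, whence $\hcl{w}=\{w\}$, while the symmetry $w\in\hcl{z}\Leftrightarrow z\in\hcl{w}$ gives $z\in\hcl{w}=\{w\}$, i.e. $z=w$, a contradiction. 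Hence $B\cap\hcl{z}=\{z\}$, and part~(1) shows $B$ is Hausdorff.

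For part~(3) I would run the classical ``compact sets are closed'' argument, adapted to \branch\ points, and this is the step I expect to carry the real content. Let $w\in\Cl{\Lsp}$ with $w\notin\specPoints$; I must show $w\in\Lsp$, so assume to the contrary that $w\notin\Lsp$. Because $w$ is not a \branch\ point, $\hcl{w}=\{w\}$, so every $y\neq w$ lies outside $\hcl{w}$ and is therefore $T_2$-disjoint from $w$; for each $y\in\Lsp$ pick disjoint open sets $\Usp_y\ni y$ and $\Vsp_y\ni w$. Then $\{\Usp_y\}_{y\in\Lsp}$ covers the compact set $\Lsp$, so finitely many $\Usp_{y_1},\ldots,\Usp_{y_n}$ suffice, and $\Vsp=\bigcap_{i=1}^{n}\Vsp_{y_i}$ is an open neighborhood of $w$ disjoint from each $\Usp_{y_i}$, hence from $\Lsp$; this contradicts $w\in\Cl{\Lsp}$. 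The only genuinely delicate point is passing from ``$w$ is not a \branch\ point'' to the existence of a single neighborhood of $w$ missing \emph{every} point of $\Lsp$ at once, and that is precisely where compactness of $\Lsp$ is used.
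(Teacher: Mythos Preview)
Your proof is correct and follows essentially the same approach as the paper's. The only cosmetic difference is in part~(3): the paper uses the description~\eqref{equ:hcl} to cover $\Lsp$ by sets of the form $\Ysp\setminus\overline{V}$ for neighborhoods $V$ of the non-\branch\ point, whereas you pick separating pairs $(\Usp_y,\Vsp_y)$ directly; after passing to a finite subcover and intersecting the neighborhoods of the point, the two arguments coincide.
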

\begin{proof}
Statement~\ref{enum:lm:specpt:Y_is_h} is an immediate consequence of~\eqref{equ:hclA}.

\ref{enum:lm:specpt:A_is_h}
To prove that $B = (\Lsp\setminus\specPoints)\cup \{z\}$ is Hausdorff it suffices to verify that $\hclA{y}{B}=\{y\}$ for all $y\in B$.

If $y\in \Lsp \setminus \specPoints$, then $\hcl{y} = \{y\}$, whence $B \cap \hcl{y} = \{y\}$.
Since $z \in \hcl{w}$ if and only if $w \in \hcl{z}$, it follows that $\hcl{z} \subset \specPoints$.
Therefore
\begin{align*}
	B \cap \hcl{z} &=
	\bigl((\Lsp\setminus\specPoints)\cup \{z\} \bigr) \cap \hcl{z} =\\
	&=
	\bigl( (\Lsp\setminus\specPoints) \cap \hcl{z} \bigr)
	\cup
	\bigl(  \{z\} \cap \hcl{z}\bigr)=
	\varnothing \cup \{z\} = \{z\}.
\end{align*}

\ref{enum:lm:specpt:comp}
Let $y \notin \Lsp   \cup \specPoints$.
We will show that then there exists an open neighborhood $W$ of $y$ such that $\Lsp \cap W = \varnothing$.
This will imply that $y\not\in\Cl{\Lsp}$, whence $\Cl{\Lsp} \subset \Lsp \cup \specPoints$.

Since $y$ is not a \branch\ point, \myemph{i.e.}, $\hcl{y} = \{y\}$, we get from~\eqref{equ:hcl} that
\[
\Lsp \ \subset \ \Lsp \cup \specPoints \ \subset \ \Ysp \setminus \{y\} \ = \ \bigcup_{V \ni y} (\Ysp \setminus \Cl{V}),
\]
where $V$ runs over all open neighborhoods of $y$.
Thus
\[
\{
\Ysp \setminus \Cl{V} \mid V \ \text{is a neighborhood of $y$}
\}
\]
is an open cover of a compact set $\Lsp$, and so it contains a finite subcover, \myemph{i.e.}, one can find open neighborhoods $V_1,\ldots,V_m$ of $y$ such that
\[
\Lsp \subset \bigcup_{i = 1}^m (\Ysp \setminus \Cl{V_i}).
\]
Hence $W = V_1 \cap \ldots \cap V_m$ is an open neighborhood of $y$ with $\Lsp \cap W = \emptyset$.
\end{proof}

\begin{remark}\rm
For a Hausdorff space $\Ysp$, statement~\ref{enum:lm:specpt:comp} of Lemma~\ref{lm:specpt} is well known and claims that every compact subset $\Lsp$ of $\Ysp$ is closed, \emph{i.e.}, $\Cl{\Lsp}=\Lsp$.
\end{remark}

\section{Locally finite subsets}\label{sect:loc_fin_subsets}
Say that a subset $\Asp \subset \Ysp$ is \myemph{locally finite} if for every point $z\in\Ysp$ there exists an open neighborhood $\Usp$ such that the intersection $\Usp\cap \Asp$ is finite.
In other words, the family $\{ \{a\} \mid a\in \Asp\}$ of one-point subsets of $\Asp$ is locally finite in $\Ysp$.

% \begin{lemma}\label{lm:T1_locfin}
% Let $\Ysp$ be a $T_1$-space and $A\subset \Ysp$ be a subset.
% %  such that the set $\specPoints$ of its special points is locally finite.
% Then the following conditions are equivalent.
% \begin{enumerate}[label={\rm(\arabic*)}, itemsep=1ex, leftmargin=*]
% \item\label{enum:lm:T1_locfin:lf}
%  $A$ is locally finite;
%  \item\label{enum:lm:T1_locfin:discr}

%  \item\label{enum:lm:T1_locfin:nbh}
% A subset $A \subset \Ysp$ is localy finite if and only if it is closed and discrete.
% In that case for each $y\in\Ysp$ there exists an open neighborhood $U$ intersecting $\specPoints$ in at most one point.
% \end{lemma}

% One easily checks that every locally finite subset of a $T_1$-space is closed and discrete.

\begin{lemma}\label{lm:T1_locfin}
Consider the following conditions on a subset $\Asp\subset\Ysp$ of a topological space $\Ysp$:

\begin{enumerate}[label={\rm(\arabic*)}, itemsep=0.5ex]
\item\label{enum:lm:T1_locfin:closed_discr}
$\Asp$ is closed and discrete.

\item\label{enum:lm:T1_locfin:U}
for each $y\in\Ysp$ there exists an open neighborhood $\Vsp$ intersecting $\Asp$ in at most one point;

\item\label{enum:lm:T1_locfin:SP_discrete}
$\Asp$ is locally finite;
\end{enumerate}
Then we have the following implications:
\ref{enum:lm:T1_locfin:closed_discr}
$\Rightarrow$
\ref{enum:lm:T1_locfin:U}
$\Rightarrow$
\ref{enum:lm:T1_locfin:SP_discrete}.

If $\Ysp$ is $T_1$ then we also have that~\ref{enum:lm:T1_locfin:SP_discrete}$\Rightarrow$\ref{enum:lm:T1_locfin:closed_discr}, i.e., all the above conditions are equivalent.
\end{lemma}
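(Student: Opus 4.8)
The plan is to verify the three implications in turn, the first two being essentially formal and the last relying on the single fact that in a $T_1$-space every finite subset is closed (equivalently, that every singleton is closed).

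For \ref{enum:lm:T1_locfin:closed_discr}$\Rightarrow$\ref{enum:lm:T1_locfin:U} I would fix $y\in\Ysp$ and distinguish whether $y\in\Asp$ or not. If $y\notin\Asp$, then since $\Asp$ is closed the open set $\Vsp=\Ysp\setminus\Asp$ is a neighborhood of $y$ meeting $\Asp$ in no points at all. If $y\in\Asp$, then discreteness of $\Asp$ (in the subspace topology) supplies an open $\Vsp\subset\Ysp$ with $\Vsp\cap\Asp=\{y\}$, a single point. In either case $y$ has a neighborhood meeting $\Asp$ in at most one point, which is \ref{enum:lm:T1_locfin:U}. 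The implication \ref{enum:lm:T1_locfin:U}$\Rightarrow$\ref{enum:lm:T1_locfin:SP_discrete} is then immediate, since a set of at most one point is finite, so \ref{enum:lm:T1_locfin:U} is literally a special case of local finiteness.

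The heart of the argument is \ref{enum:lm:T1_locfin:SP_discrete}$\Rightarrow$\ref{enum:lm:T1_locfin:closed_discr} under the $T_1$ hypothesis, where the idea is to \emph{prune} a finite neighborhood down to one meeting $\Asp$ minimally. Given $y\in\Ysp$, local finiteness yields an open $\Usp\ni y$ with $\Usp\cap\Asp=\{a_1,\dots,a_k\}$ finite. If $y\notin\Asp$, then $a_i\neq y$ for all $i$; since $\Ysp$ is $T_1$ the finite set $\{a_1,\dots,a_k\}$ is closed, so $\Vsp=\Usp\setminus\{a_1,\dots,a_k\}$ is an open neighborhood of $y$ disjoint from $\Asp$, proving $\Ysp\setminus\Asp$ open, i.e. $\Asp$ is closed. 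If instead $y\in\Asp$, say $y=a_1$, then $\{a_2,\dots,a_k\}$ is a closed finite set not containing $y$, so $\Vsp=\Usp\setminus\{a_2,\dots,a_k\}$ is an open neighborhood of $y$ with $\Vsp\cap\Asp=\{y\}$, which shows $\Asp$ is discrete in the subspace topology. Together these two cases give \ref{enum:lm:T1_locfin:closed_discr}.

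I do not expect a serious obstacle; the only point demanding care is to invoke the $T_1$ axiom in the precise form ``finite subsets are closed,'' which is exactly what converts the mere finiteness furnished by \ref{enum:lm:T1_locfin:SP_discrete} into the separation needed for \ref{enum:lm:T1_locfin:closed_discr}. It is worth noting that this hypothesis cannot be dropped for closedness: in the Sierpi\'nski space $\{0,1\}$ with $\{0\}$ open, the set $\Asp=\{0\}$ satisfies \ref{enum:lm:T1_locfin:U} (hence \ref{enum:lm:T1_locfin:SP_discrete}) yet is not closed, so the reverse implications genuinely require $T_1$. Everything else in the argument is routine bookkeeping with open sets.
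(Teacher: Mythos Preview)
Your proof is correct. For the implications \ref{enum:lm:T1_locfin:closed_discr}$\Rightarrow$\ref{enum:lm:T1_locfin:U}$\Rightarrow$\ref{enum:lm:T1_locfin:SP_discrete} you match the paper essentially verbatim.

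For \ref{enum:lm:T1_locfin:SP_discrete}$\Rightarrow$\ref{enum:lm:T1_locfin:closed_discr} under $T_1$, the paper takes a slightly different route: rather than pruning a finite set of points out of a neighborhood, it observes that \emph{every} subset $B\subset\Asp$ is again locally finite, and hence (being a locally finite union of closed singletons in a $T_1$-space) is closed in $\Ysp$. Once every subset of $\Asp$ is closed, $\Asp$ itself is closed, and taking $B=\Asp\setminus\{a\}$ shows each $\{a\}$ is open in $\Asp$, so $\Asp$ is discrete. Your argument is more hands-on and arguably more transparent for a first reading; the paper's is a touch slicker in that it avoids the case split on $y\in\Asp$ versus $y\notin\Asp$ and packages everything into the single principle ``locally finite unions of closed sets are closed.'' Both rely on exactly the same use of $T_1$ (singletons, hence finite sets, are closed), so the underlying idea is the same.
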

\begin{proof}
\ref{enum:lm:T1_locfin:closed_discr}$\Rightarrow$\ref{enum:lm:T1_locfin:U}.
Suppose $\Asp$ is closed and discrete and let $y\in\Ysp$.
If $y\not\in\Asp$, then $\Vsp = \Ysp\setminus\Asp$ is an open neighborhood of $y$ that does not intersect $\Asp$.
If $y\in\Asp$, then discreteness of $\Asp$ implies that there exists an open neighborhood $\Vsp$ of $y$ such that $\Vsp\cap\Asp = \{y\}$.

The implication \ref{enum:lm:T1_locfin:U}$\Rightarrow$\ref{enum:lm:T1_locfin:SP_discrete} is evident.

It remains to prove the implication \ref{enum:lm:T1_locfin:SP_discrete}$\Rightarrow$\ref{enum:lm:T1_locfin:closed_discr} under the assumption that $\Ysp$ is $T_1$.
Suppose $\Asp$ is locally finite.
Then each subset $B\subset\Asp$ is locally finite as well.
Moreover, as every point $y\in\Ysp$ is a closed subset, it follows that $B$ is closed as a union of a locally finite family of its closed one-point subsets.
In other words every subset of $\Asp$ is closed in $\Ysp$.
Hence $\Asp$ is closed and discrete.
\end{proof}

The following statement is a principal result of the present paper.

\begin{theorem}\label{th:fK_L}
Let $\Xsp$ be a locally compact Hausdorff topological space, $\Ysp$ a $T_1$-space whose set $\specPoints$ of \branch\ points is locally finite, $\prj:\Xsp\to\Ysp$ an open continuous and surjective map.
Then $\prj$ has properties \COMP and \CONT.
%Then for every compact $\Lsp \subset \Ysp$ there exists a compact subset $\Ksp\subset\Xsp$ such that $\prj(\Ksp) = \Lsp$.
%In particular, due to Lemma~\ref{lm:cond_for_cont_ahom}, the map~\eqref{equ:ahom_EXP_EY} $\ahom:\End(\Xsp, \Partition) \to \End(\Ysp)$ is continuous with respect to compact open topologies.
\end{theorem}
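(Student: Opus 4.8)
The plan is to reduce everything to property \COMP. Since $\prj$ is open and surjective it is a factor map, so we are in the setting of Section~\ref{sect:arb_partitions}, and by Lemma~\ref{lm:cond_for_cont_ahom} it suffices to verify \COMP: for every compact $\Lsp\subset\Ysp$ I must produce a compact $\Ksp\subset\Xsp$ with $\prj(\Ksp)=\Lsp$. The first preparatory observation is that, since $\Ysp$ is $T_1$ and $\specPoints$ is locally finite, Lemma~\ref{lm:T1_locfin} tells us that $\specPoints$ is closed and discrete; in particular every point $y\in\Ysp$ has an open neighborhood $O_y$ with $O_y\cap\specPoints\subseteq\{y\}$ (for $y\notin\specPoints$ take $O_y=\Ysp\setminus\specPoints$, and for $y\in\specPoints$ use condition~(2) of that lemma). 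This is the device that will let me suppress the non-Hausdorff behavior locally.

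Next, rather than realizing $\Lsp$ as the image of one large compact set, I would build $\Ksp$ from local pieces. Fix $y\in\Lsp$ and a point $x_y\in\prj^{-1}(y)$. Because $\Xsp$ is locally compact in the sense of Definition~\ref{def:loc_comp}, the open set $\prj^{-1}(O_y)$ contains a compact neighborhood $\Ksp_y$ of $x_y$, so that $x_y\in\Int{\Ksp_y}\subset\Ksp_y\subset\prj^{-1}(O_y)$; then $\prj(\Ksp_y)\subseteq O_y$ and, since $\prj$ is open, $\Vsp_y:=\prj(\Int{\Ksp_y})$ is an open neighborhood of $y$. By construction $\prj(\Ksp_y)\cap\specPoints\subseteq O_y\cap\specPoints\subseteq\{y\}$.

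The crucial step is to check that each local piece $A_y:=\Ksp_y\cap\prj^{-1}(\Lsp)$ is already compact. Since $\Ksp_y$ is closed in the Hausdorff space $\Xsp$, it suffices to see that $A_y$ is closed, i.e. $\Cl{A_y}\subseteq A_y$. A point $x\in\Cl{A_y}$ lies in $\Ksp_y$ and, by continuity of $\prj$, satisfies $\prj(x)\in\Cl{\prj(A_y)}\subseteq\Cl{\Lsp}$; by part~\ref{enum:lm:specpt:comp} of Lemma~\ref{lm:specpt}, $\Cl{\Lsp}\subseteq\Lsp\cup\specPoints$. If $x\notin A_y$ then $\prj(x)\notin\Lsp$, so $\prj(x)\in(\Cl{\Lsp}\setminus\Lsp)\cap\prj(\Ksp_y)\subseteq\specPoints\cap\prj(\Ksp_y)\subseteq\{y\}$, forcing $\prj(x)=y\in\Lsp$, a contradiction. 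Hence $A_y$ is closed, thus compact, with $\prj(A_y)\subseteq\Lsp$ and $y\in\Vsp_y$.

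Finally I would invoke compactness of $\Lsp$: the sets $\Vsp_y$, $y\in\Lsp$, form an open cover, so $\Lsp\subseteq\Vsp_{y_1}\cup\dots\cup\Vsp_{y_n}$ for finitely many points, and $\Ksp:=A_{y_1}\cup\dots\cup A_{y_n}$ is compact as a finite union of compacts, with $\prj(\Ksp)\subseteq\Lsp$. For the reverse inclusion, any $y\in\Lsp$ lies in some $\Vsp_{y_i}=\prj(\Int{\Ksp_{y_i}})$, hence has a preimage in $\Int{\Ksp_{y_i}}\cap\prj^{-1}(\Lsp)\subseteq A_{y_i}\subseteq\Ksp$, so $y\in\prj(\Ksp)$ and $\prj(\Ksp)=\Lsp$. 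This establishes \COMP and, via Lemma~\ref{lm:cond_for_cont_ahom}, \CONT. I expect the main obstacle to be exactly the compactness of the local pieces $A_y$: the naive global choice $\Ksp'\cap\prj^{-1}(\Lsp)$ can fail to be closed because it accumulates onto fibers over branch points in $\Cl{\Lsp}\setminus\Lsp$, and the whole point of working locally with the neighborhoods $O_y$ furnished by $\specPoints$ being closed and discrete is precisely to arrange $\prj(\Ksp_y)\cap\specPoints\subseteq\{y\}$, which removes every such accumulation.
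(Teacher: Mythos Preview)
Your proof is correct and follows essentially the same approach as the paper. You reduce to \COMP via Lemma~\ref{lm:cond_for_cont_ahom}, use Lemma~\ref{lm:T1_locfin} to get neighborhoods meeting $\specPoints$ in at most one point, choose compact neighborhoods $K_y$ inside $\prj^{-1}(O_y)$, and then use Lemma~\ref{lm:specpt}\ref{enum:lm:specpt:comp} to show the local pieces $A_y=K_y\cap\prj^{-1}(\Lsp)$ are closed (hence compact), finishing by a finite cover. The paper does the same thing, indexing instead by points $x\in\prj^{-1}(\Lsp)$ and packaging your closure argument as the identity $\Lsp\cap\prj(\Cl{\Usp_x})=\Cl{\Lsp}\cap\prj(\Cl{\Usp_x})$ (Sublemma~\ref{lm:sub:fK_L}); their set $K_x=\prj^{-1}(\Lsp\cap\prj(\Cl{\Usp_x}))\cap\Cl{\Usp_x}$ coincides with your $A_y$, so the difference is purely presentational.
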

\begin{proof}
Due to Lemma~\ref{lm:cond_for_cont_ahom} it suffices to verify only \COMP.

Since $\specPoints$ is locally finite, it follows from Lemma~\ref{lm:T1_locfin} that each point $y\in\Ysp$ has an open neighborhood $\Vsp_{y}$ intersecting $\specPoints$ in at most one point, that is
\[
\specPoints  \cap \Vsp_{y} \ \subset \ \{y\}.
\]
%Hence by~\ref{enum:lm:specpt:A_is_h} of Lemma~\ref{lm:specpt} $\Vsp_{y}$ is Hausdorff.

As $\Xsp$ is locally compact Hausdorff, for each $x\in \prj^{-1}(\Lsp)$ there exists an open neighborhood $\Usp_{x}$ with compact closure $\Cl{\Usp_x}$ such that $\Cl{\Usp_x} \subset \prj^{-1}(\Vsp_{\prj(x)})$.
Then the following lemma holds:
\begin{sublemma}\label{lm:sub:fK_L}
We have that
\begin{equation}\label{equ:L_pclUx__clL_pclUx}
\Lsp \,\cap\, \prj(\Cl{\Usp_x}) \,=\, \Cl{\Lsp} \,\cap\, \prj(\Cl{\Usp_x}).
\end{equation}
This implies that
\begin{enumerate}[label={\rm(\alph*)}, itemsep=0.8ex]
%\item\label{enum:th:fK_L:1}
%$\Lsp \,\cap\, \prj(\Cl{\Usp_x}) \,=\, \Cl{\Lsp} \,\cap\, \prj(\Cl{\Usp_x})$;
\item\label{enum:lm:fK_L:1}
$\prj^{-1}(\Lsp \cap \prj(\Cl{\Usp_x}))$ is closed in $\prj^{-1}\bigl(\prj(\Cl{\Usp_x})\bigr)$;
\item\label{enum:lm:fK_L:2}
the set $\Ksp_x := \prj^{-1}(\Lsp \cap \prj(\Cl{\Usp_x})) \, \cap \, \Cl{\Usp_{x}}$ is compact;
\item\label{enum:lm:fK_L:3} $\prj(\Ksp_x) = \Lsp \cap \prj(\Cl{\Usp_x})$.
\end{enumerate}
\end{sublemma}
\begin{proof}
\eqref{equ:L_pclUx__clL_pclUx}
The inclusion $\Lsp \cap \prj(\Cl{\Usp_x}) \subset \Cl{\Lsp} \cap \prj(\Cl{\Usp_x})$ is trivial.
Conversely, by~\ref{enum:lm:specpt:comp} of Lemma~\ref{lm:specpt} $\Cl{\Lsp} \subset \Lsp \cup \specPoints$, whence
\[
	\Cl{\Lsp} \,\cap\, \prj(\Cl{\Usp_x}) \ \subset \
	(\Lsp \cup \specPoints) \cap \prj(\Cl{\Usp_x}) \ = \
	\bigl(\Lsp \cap \prj(\Cl{\Usp_x})\bigr) \cup
	\bigl(\specPoints \cap \prj(\Cl{\Usp_x})\bigr).
\]
But
\[
\specPoints \cap \prj(\Cl{\Usp_x}) \ \subset \
\specPoints \cap \Vsp_{\prj(x)} \ \subset \
\{\prj(x)\} \ \subset \
\Lsp \cap \prj(\Cl{\Usp_x}),
\]
whence $\Cl{\Lsp}\, \cap\, \prj(\Cl{\Usp_x}) \subset \Lsp \cap \prj(\Cl{\Usp_x})$ as well.

\medskip
\ref{enum:lm:fK_L:1}
Due to~\eqref{equ:L_pclUx__clL_pclUx}, the intersection $\Lsp \cap \prj(\Cl{\Usp_x})=\Cl{\Lsp} \cap \prj(\Cl{\Usp_x})$ is closed in $\prj(\Cl{\Usp_x})$.
Therefore $\prj^{-1}(\Lsp \cap \prj(\Cl{\Usp_x}))$ is closed in $\prj^{-1}\bigl(\prj(\Cl{\Usp_x})\bigr)$ by continuity of $\prj$.

\medskip

\ref{enum:lm:fK_L:2}
By~\ref{enum:lm:fK_L:1} $\Ksp_x := \prj^{-1}(\Lsp \cap \prj(\Cl{\Usp_x}))  \cap  \Cl{\Usp_{x}}$ is an intersection of two closed subsets of $\prj^{-1}(\prj(\Cl{\Usp_x}))$.
Hence $\Ksp_x$ is a closed subset of compact set $\Cl{\Usp_{x}}$, and therefore it is compact as well.

\medskip

\ref{enum:lm:fK_L:3}
Proof of this statement is based on the following simple observation.

\emph{Let $\prj: U \to V$ be a map between sets $U$ and $V$.
Then for any subsets $A \subset U$ and $B \subset V$ we have that $\prj(\prj^{-1}(B) \cap A) = B \cap \prj(A)$.}

In particular, for $A = \Cl{\Usp_x}$ and $B = \Lsp \cap \prj(\Cl{\Usp_x})$ we obtain that
\[
\prj(\Ksp_x) =
\prj \bigl( \prj^{-1}(\Lsp \cap \prj(\Cl{\Usp_x}))  \cap  \Cl{\Usp_{x}} \bigr) =
(\Lsp \cap \prj(\Cl{\Usp_x})) \cap \prj(\Cl{\Usp_x}) = \Lsp \cap \prj(\Cl{\Usp_x}).
\]
This completes Lemma~\ref{lm:sub:fK_L}.
\end{proof}

Now we can deduce Theorem~\ref{th:fK_L} from statement~\ref{enum:lm:fK_L:3} of Lemma~\ref{lm:sub:fK_L}.
Since $\prj$ is an open map, $\prj(\Usp_{x})$ is an open neighborhood of $\prj(x)$, whence by compactness of $\Lsp$ one can find finitely many points $x_1,\ldots,x_n\in \prj^{-1}(\Lsp)$ such that
\[ \Lsp \ \subset \ \mathop{\cup}\limits_{i=1}^{n} \prj(\Usp_{x_i})
\ \subset \ \mathop{\cup}\limits_{i=1}^{n} \prj(\Cl{\Usp_{x_i}}).
\]
Put $\Ksp = \mathop{\cup}\limits_{i=1}^{n} \Ksp_{x_i}$.
Then $\Ksp$ is compact and
\[
\prj(\Ksp) =
\prj\Bigl( \mathop{\cup}\limits_{i=1}^{n} \Ksp_{x_i} \Bigr)  =
\mathop{\cup}\limits_{i=1}^{n} \prj(\Ksp_{x_i})
\stackrel{\ref{enum:lm:fK_L:3}}{=}
\mathop{\cup}\limits_{i=1}^{n} \bigl( \Lsp \cap \prj(\Cl{\Usp_{x_i}}) \bigr) =
\Lsp.
\]
Theorem~\ref{th:fK_L} is proved.
\end{proof}

\subsection{Proof of Theorem~\ref{th:fK_L__foliations}}\label{sect:ect:proof:th:fK_L__foliations}
Let $\Partition$ be a foliation on a topological manifold $\Xsp$ such that the space of leaves $\Ysp$ is $T_1$ and the set $\specPoints$ of \branch\ points of $\Ysp$ is locally finite.

Then $\Xsp$ is a locally compact Hausdorff topological space, and the map $\prj:\Xsp\to\Ysp$ onto the spaces of leaves is open.
Hence by Theorem~\ref{th:fK_L} the map~\eqref{equ:ahom_EXP_EY}, $\ahom:\HXP\to\HY$, is continuous with respect to compact open topologies.
\qed

\section{Examples}\label{sect:examples}
In fact conditions \CONT and \COMP hold in many situations not covered by Corollary~\ref{cor:cond_for_cont_ahom} and Theorem~\ref{th:fK_L}.
We will consider several examples in which $\Ysp$ has more ``pathological'' properties.
In spite of certain triviality of statements below, they describe situations which very often appear in the foliations theory.
%They are formulated not in a full generality just in order to illustrate the ideas.

Thus again we will assume that $\Partition$ is a partition of a topological space $\Xsp$, $\Ysp$ is the corresponding space of leaves, $\prj:\Xsp\to\Ysp$ is the natural projection, and we endow $\Ysp$ with the factor topology with respect to $\prj$.

In order to check continuity of the induced homomorphism $\ahom:\HXP\to\HY$ and to verify property \COMP for the projection map $\prj$ we will use two following lemmas.

%\subsection*{Topologies of $\Ysp$ with finitely many open sets}
\subsection*{Trivial topology on $\Ysp$}
Recall that the topology on a set $\Ysp$ consisting only of two sets $\{\varnothing, \Ysp\}$ is called \myemph{trivial} (or \myemph{antidiscrete}).

\begin{lemma}\label{lm:each_orbit_dense}
Suppose either of the following conditions holds:
\begin{enumerate}[label={\rm(\alph*)}]
\item\label{enum:Y_triv:leaf_dense}
each element of $\Partition$ is everywhere dense;
\item\label{enum:Y_triv:top_triv}
$\Ysp$ has trivial topology.
\end{enumerate}
Then the compact open topology on $\CYY$ is also trivial, whence any map into $\CYY$ is continuous.
In particular, $\ahom$ is so, that is $\prj$ has property \CONT.
%Let $\Partition$ be a partition of $\Xsp$ such that every element $\leaf\in\Partition$ is everywhere dense in $\Xsp$.
%Then the factor space $\Ysp$ has trivial topology (every open set is either empty or all $\Ysp$), and the map $\ahom$ is in general not continuous with respect to compact open topologies.
\end{lemma}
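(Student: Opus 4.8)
The plan is to show that under either hypothesis the compact open topology on $\CYY$ collapses to the trivial topology, after which the conclusion is immediate, since every map into a space carrying the trivial topology is continuous.

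First I would reduce (a) to (b), i.e. show that if each element of $\Partition$ is everywhere dense in $\Xsp$, then $\Ysp$ carries the trivial topology. Let $A \subset \Ysp$ be a nonempty open set. Then $\prj^{-1}(A)$ is a nonempty open subset of $\Xsp$. For an arbitrary $y \in \Ysp$ the fibre $\prj^{-1}(y)$ is dense in $\Xsp$, hence it meets the nonempty open set $\prj^{-1}(A)$; any common point $x$ satisfies $\prj(x) = y$ and $\prj(x) \in A$, so $y \in A$. As $y$ was arbitrary, $A = \Ysp$, and therefore $\emptyset$ and $\Ysp$ are the only open subsets of $\Ysp$, which is precisely (b). Thus it suffices to treat case (b).

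Assuming $\Ysp$ has the trivial topology, I would verify directly that each prebase set $\NKU{K}{U}$ of the compact open topology on $\CYY$ is either $\emptyset$ or all of $\CYY$. Since the only open $U \subset \Ysp$ are $\emptyset$ and $\Ysp$, two cases arise. If $U = \Ysp$, then $f(K) \subset \Ysp$ for every continuous $f$, so $\NKU{K}{\Ysp} = \CYY$. If $U = \emptyset$, then $\NKU{K}{\emptyset} = \{ f \mid f(K) = \emptyset \}$, which equals $\CYY$ when $K = \emptyset$ (as $f(\emptyset) = \emptyset$) and equals $\emptyset$ when $K \neq \emptyset$ (as then $f(K) \neq \emptyset$). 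Since each prebase element is $\emptyset$ or $\CYY$, finite intersections and then arbitrary unions produce only $\emptyset$ and $\CYY$, so the compact open topology on $\CYY$ is trivial.

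Finally, any map into $\CYY$ is continuous, because the preimages of the only two open sets $\emptyset$ and $\CYY$ are $\emptyset$ and the whole domain. In particular $\ahom$ is continuous; and since the subspace topology that $\HY$ inherits from $\CYY$ is again trivial, $\ahom : \HXP \to \HY$ is continuous as well, which is exactly property \CONT for $\prj$. I expect the only point requiring care to be the reduction (a) $\Rightarrow$ (b), where density of the fibres must be used correctly; the prebase analysis is routine provided the empty-set boundary cases $K = \emptyset$ and $U = \emptyset$ are treated explicitly.
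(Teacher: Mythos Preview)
Your proof is correct and follows essentially the same route as the paper: reduce (a) to (b) via the quotient topology, then observe that with only $\varnothing$ and $\Ysp$ available as open sets every prebase element $\NKU{K}{U}$ is either $\varnothing$ or all of $\CYY$, so the compact open topology is trivial. You are in fact more careful than the paper, which merely asserts the equivalence of (a) and (b) and skips the $U=\varnothing$ edge case.
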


\begin{proof}
One easily checks that conditions~\ref{enum:Y_triv:leaf_dense} and~\ref{enum:Y_triv:top_triv} are equivalent.

%Since every non-empty open subset $\Usp$ of $\Xsp$ intersects each leaf, it follows that $\prj(\Usp) =\Ysp$.
%This implies that every non-empty subset of $\Ysp$ must coincide with all of $\Ysp$.
%Since $\Ysp$ has trivial topology,
Now, since the topology of $\Ysp$ contains only finitely many non-empty open sets (in fact a unique such set), it follows that every subset $\Lsp$ of $\Ysp$ is compact.
Hence every prebase set of compact open topology of $\CYY$ has the form
\[
\NKU{\Lsp}{\Ysp} = \{ f\in\CYY \mid f(\Lsp) \subset \Ysp\} = \CYY,
\]
where $\Lsp$ is an arbitrary non-empty subset of $\Ysp$.
In other words, the compact open topology on $\CYY$ contains only one non-empty set $\CYY$, and therefore is trivial.
\end{proof}

\subsection*{Baire spaces}
Let $E$ be a subset of a topological space $X$.
Then $E$ is \emph{of first category in $X$} if $E$ can be presented as a countable union of subsets which are nowhere dense in $X$.
Otherwise, $E$ is said to be \emph{of second category in $X$}, i.e. it can not be presented as a countable union of nowhere dense subsets of $X$.
\emph{A Baire space} is a topological space $X$ which is of second category in itself.

\begin{lemma}\label{lm:Gact_BaireSpaces}
Let $G$ be a topological group acting on a topological space $\Xsp$, $\Partition$ the partition of $X$ into $G$-orbits, $\Ysp = \Xsp / G$ the quotient space, and $\prj: \Xsp \to \Ysp$ the natural projection.
Suppose that
\begin{enumerate}[label={\rm(\alph*)}]
\item $\Xsp$ is a Hausdorff Baire space;
\item $G = \mathop{\cup}\limits_{i=1}^{\infty} G_i$ can be represented as a union of countably many compact sets $G_i$;
\item\label{enum:lm:Gact_BaireSpaces:2orb} $G$-action has at least two distinct orbits;
\item\label{enum:lm:Gact_BaireSpaces:dense_orb} each orbit of $G$-action is dense in $X$;
\end{enumerate}
Then $\prj$ has property \CONT but not \COMP.
\end{lemma}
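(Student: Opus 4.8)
The continuity half, property \CONT, is essentially free. Condition (d) says that every $G$-orbit, i.e.\ every element of $\Partition$, is dense in $\Xsp$, so hypothesis~\ref{enum:Y_triv:leaf_dense} of Lemma~\ref{lm:each_orbit_dense} is satisfied. That lemma then yields that the compact open topology on $\CYY$ is trivial, and hence that \emph{any} map into $\CYY$ is continuous; in particular $\ahom$ is, so $\prj$ has property \CONT. The real content is the failure of \COMP, and the plan is to pinpoint a single compact $\Lsp$ that no compact $\Ksp$ can map onto.

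First I would record two structural observations. By the equivalence of~\ref{enum:Y_triv:leaf_dense} and~\ref{enum:Y_triv:top_triv} in Lemma~\ref{lm:each_orbit_dense}, density of all orbits forces $\Ysp$ to carry the trivial topology, so (exactly as in the proof of that lemma) \emph{every} subset of $\Ysp$ is compact. Second, I would isolate the key necessary condition: if a compact $\Ksp\subset\Xsp$ satisfies $\prj(\Ksp)=\Lsp$, then $\prj^{-1}(\Lsp)=\prj^{-1}(\prj(\Ksp))=G\cdot\Ksp=\bigcup_{i} G_i\cdot\Ksp$, and each $G_i\cdot\Ksp$, being the image of the compact set $G_i\times\Ksp$ under the continuous action map, is compact and (as $\Xsp$ is Hausdorff) closed. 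Thus a compact $\Ksp$ with $\prj(\Ksp)=\Lsp$ can exist only if $\prj^{-1}(\Lsp)$ is a countable union of compact sets.

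The plan is then to choose $\Lsp$ so that $\prj^{-1}(\Lsp)$ fails this property, via a Baire-category count. Fix $x_0\in\Xsp$, let $O_0=\prj^{-1}(\prj(x_0))$ be its orbit, and take $\Lsp=\Ysp\setminus\{\prj(x_0)\}$, which is compact and, by (c), nonempty; here $\prj^{-1}(\Lsp)=\Xsp\setminus O_0$. Suppose toward a contradiction that a compact $\Ksp$ with $\prj(\Ksp)=\Lsp$ existed, so that $G\cdot\Ksp=\Xsp\setminus O_0$. Then $\Xsp=O_0\cup(\Xsp\setminus O_0)=\bigl(\bigcup_i G_i\cdot x_0\bigr)\cup\bigl(\bigcup_i G_i\cdot\Ksp\bigr)$ would be a countable union of the compact (hence closed) sets $G_i\cdot x_0$ and $G_i\cdot\Ksp$. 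I would then check that each such set is nowhere dense: the complement of $G_i\cdot\Ksp$ contains $O_0$, which is dense by (d); and the complement of $G_i\cdot x_0$ contains a second orbit $O_1$, which exists by (c) and is dense by (d). A closed set with dense complement has empty interior, so all these sets are nowhere dense, making $\Xsp$ of first category in itself and contradicting that $\Xsp$ is a Baire space (a). Hence no such $\Ksp$ exists, and \COMP fails.

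I expect the only genuinely delicate point to be the choice of the witness $\Lsp$. The naive candidate $\Lsp=\Ysp$ does \emph{not} work: when $\Xsp$ is compact — as for the irrational flow on the torus — the set $\Ksp=\Xsp$ already satisfies $\prj(\Ksp)=\Ysp$. The trick is instead to delete a single orbit and play the density of the deleted orbit against the density of the remaining ones inside one Baire-category accounting; once this choice is made, the compactness of $G\cdot\Ksp$, the nowhere-density estimates, and the final contradiction are all routine.
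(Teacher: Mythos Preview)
Your proof is correct and follows essentially the same Baire-category argument as the paper: split $\Xsp$ into two saturated pieces, write each as a countable union of compact (hence closed) sets via the $G_i$, and use density of orbits to see these sets are nowhere dense, contradicting Baire. The only difference is cosmetic: the paper takes an arbitrary proper $\Lsp\subset\Ysp$ and its complement $\Lsp'$ and invokes \COMP\ for both to get $\Ksp,\Ksp'$, whereas you specialize to $\Lsp'=\{\prj(x_0)\}$ and note that $\Ksp'=\{x_0\}$ already does the job for that piece without appealing to \COMP.
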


\begin{proof}
Condition \CONT follows from~\ref{enum:lm:Gact_BaireSpaces:dense_orb} and Lemma~\ref{lm:each_orbit_dense}.
That lemma also implies that $\Ysp$ has trivial topology, and therefore every subset $\Lsp\subset\Ysp$ is compact.

By assumption~\ref{enum:lm:Gact_BaireSpaces:2orb}, $\Ysp$ contains at least two distinct points, whence there is a proper subset $\Lsp$ of $\Ysp$.
Then its complement $\Lsp' = \Ysp \setminus \Lsp$ is also a proper subset of $\Ysp$.
Thus $\Lsp$ and $\Lsp'$ are proper compact subsets of $\Ysp$.
Denote
\begin{align*}
A&:= \prj^{-1}(\Lsp), &
A'&:= \prj^{-1}(\Lsp') = \Xsp \setminus A.
\end{align*}
Suppose there exist compact sets $\Ksp\subset A$ and $\Ksp' \subset A'$ such that $\prj(\Ksp) = \Lsp$ and $\prj(\Ksp') = \Lsp'$.
We will show that this contradicts to assumptions of the lemma, and thus will imply that condition \COMP fails for $\prj$.

Let $\mu:G \times\Xsp\to\Xsp$ be the action map.
Then the relations $\prj(\Ksp) = \Lsp$ and $\prj(\Ksp') = \Lsp'$ mean that
\begin{align}\label{equ:X_1_cat}
A &= \mu(G \times \Ksp) = \bigcup_{i \in \bN} \mu(G_i \times \Ksp), &
A' &= \mu(G \times \Ksp') = \bigcup_{i \in \bN} \mu(G_i \times \Ksp').
\end{align}

Since every orbit of $\mu$ is dense in $\Xsp$, it follows that $A$ and $A'$ are dense in $\Xsp$ as well, whence their \myemph{compact} (and therefore closed in $\Xsp$) subsets $\mu(G_i \times \Ksp)$ and $\mu(G_i \times \Ksp')$ are nowhere dense in $\Xsp$.
Therefore~\eqref{equ:X_1_cat} implies that $\Xsp = A \sqcup A'$ is of first category, which contradicts to the assumption that $\Xsp$ is Baire.
Hence condition \COMP fails for $\prj$.
\end{proof}

\subsection*{Irrational rotation of the circle}
Let $S^1 =\{|z|=1\} \subset\bC$ be the unit circle in the complex plane, $\alpha \in (0, 1)$, and $f: S^1 \to S^1$ the rotation by the angle $2\pi\alpha$ given by $f(z)=ze^{2\pi i\alpha}$.
Then iterations $f^k$, $k \in \bZ$, of $f$ generate the action of $\bZ$ on $S^1$ given by the action map $\mu:\bZ \times S^1 \to S^1$, $ \mu(k, z) = f^k(z)$, $(k, z) \in \bZ \times S^1$.

Let $\Partition$ be the partition of $S^1$ into the orbits of this action, $\Ysp = S^1 / \Partition$ the quotient space, and $\prj: S^1 \to \Ysp$ the natural projection.

\begin{lemma}\label{lm:irr_rotation}
The map $\prj$ has property \CONT.
On the other $\prj$ has property \COMP iff $\alpha$ is rational.
\end{lemma}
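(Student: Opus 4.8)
The plan is to split the statement into two parts: the property \CONT, which should hold for every irrational $\alpha$ as well as every rational $\alpha$ (so, unconditionally), and the property \COMP, which I claim holds exactly when $\alpha$ is rational. The two parts require genuinely different inputs, so I would treat them separately.

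First I would dispose of \CONT. The key dichotomy is the arithmetic of $\alpha$. When $\alpha$ is rational, say $\alpha=p/q$ in lowest terms, every orbit is finite of size $q$, so each orbit is a closed (indeed discrete) subset of $S^1$, the quotient $\Ysp$ is a Hausdorff space homeomorphic to $S^1$, and the projection $\prj$ is a $q$-to-one covering map, hence proper onto a locally compact Hausdorff space; \CONT then follows directly from Corollary~\ref{cor:cond_for_cont_ahom}\ref{enum:condstar:proper}. When $\alpha$ is irrational, every orbit $\{ze^{2\pi i k\alpha}\mid k\in\bZ\}$ is dense in $S^1$ by the classical equidistribution/Weyl argument (the orbit is infinite, hence has a limit point, and the difference of two orbit points is again an orbit point, so the orbit is a dense subgroup coset). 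Density of every orbit is precisely condition~\ref{enum:Y_triv:leaf_dense} of Lemma~\ref{lm:each_orbit_dense}, which immediately yields that the compact open topology on $\CYY$ is trivial and hence \CONT holds. Thus in both cases $\prj$ has property \CONT.

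Next I would establish the \COMP dichotomy. The rational direction is easy: since $\prj$ is proper in that case (as noted above), for any compact $\Lsp\subset\Ysp$ the set $\Ksp=\prj^{-1}(\Lsp)$ is compact and surjects onto $\Lsp$, so \COMP holds. The substantive direction is the irrational case, where I must show \COMP \emph{fails}. Here I would invoke Lemma~\ref{lm:Gact_BaireSpaces} with $G=\bZ$ acting on $\Xsp=S^1$. I need to check its four hypotheses: $S^1$ is a compact metric space, hence Hausdorff and Baire (complete metric spaces are Baire); $\bZ=\bigcup_{i\in\bN} G_i$ with $G_i=\{-i,\dots,i\}$ a countable union of finite, hence compact, sets; the action has infinitely many (in particular at least two) distinct orbits since each orbit is countable while $S^1$ is uncountable; and every orbit is dense, as established in the previous paragraph. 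All four hypotheses of Lemma~\ref{lm:Gact_BaireSpaces} are therefore met, and it concludes that $\prj$ has \CONT but not \COMP. This both reconfirms \CONT and delivers the failure of \COMP for irrational $\alpha$.

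The main obstacle, and the only place requiring real care, is the density of orbits for irrational $\alpha$, since that single fact powers both the \CONT verification (via Lemma~\ref{lm:each_orbit_dense}) and the \COMP failure (via hypothesis~\ref{enum:lm:Gact_BaireSpaces:dense_orb} of Lemma~\ref{lm:Gact_BaireSpaces}). This is the well-known fact that an irrational rotation has dense orbits, provable by a pigeonhole argument: among $z, f(z),\dots, f^N(z)$ two points lie within $1/N$ of each other, so some $f^{m}(z)$ is a small rotation, whose iterates then fill the circle; irrationality guarantees these small rotations are nonzero. Everything else is bookkeeping: once density is in hand, the two cited lemmas and the cited corollary do all the remaining work, and I expect no further difficulty.
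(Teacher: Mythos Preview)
Your proposal is correct and follows essentially the same approach as the paper: the rational case via Corollary~\ref{cor:cond_for_cont_ahom} (the paper cites~\ref{enum:condstar:loc_triv_fibr} rather than~\ref{enum:condstar:proper}, but both apply), and the irrational case via Lemma~\ref{lm:Gact_BaireSpaces}, which simultaneously yields \CONT and the failure of \COMP. Your write-up is simply more explicit in verifying the hypotheses of Lemma~\ref{lm:Gact_BaireSpaces} and in justifying density of orbits.
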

\begin{proof}
If $\alpha$ is rational, then $\Ysp$ is homeomorphic with the circle and the projection map $\prj:S^1\to \Ysp$ is a locally trivial fibration (in fact a finite covering map).
In this case, due to Lemma~\ref{cor:cond_for_cont_ahom}\ref{enum:condstar:loc_triv_fibr}, the homomorphism $\ahom$ is continuous, i.e. $\prj$ has property \CONT.
Moreover, since $S^1$ is compact, the projection map is also proper, whence property \COMP holds as well.

On the other hand, if $\alpha$ is irrational, then the action of $\bZ$ on $S^1$ satisfies assumptions of Lemma~\ref{lm:Gact_BaireSpaces}, whence property \CONT holds, while property \COMP fails.
\end{proof}

\subsection*{Irrational flow of the torus}
More generally, let $T^2 = S^1\times S^1 \cong \bR^2 / \bZ^2$ be a $2$-torus, $\alpha\in\bR$, and $H:T^2\times\bR\to T^2$ be the flow defined by $H(x,y,t) = (x+t, y+\alpha t)$.
Let $\Partition$ be the foliation of $T^2$ by the orbits of $H$ and $\Ysp$ be the space of leaves.
\begin{lemma}\label{lm:irr_flow}
The map $\prj$ has property \CONT.
On the other $\prj$ has property \COMP iff $\alpha$ is rational.
\end{lemma}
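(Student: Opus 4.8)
The plan is to follow exactly the two-case strategy used in the proof of Lemma~\ref{lm:irr_rotation} for the circle, replacing the $\bZ$-action of an irrational rotation by the $\bR$-action given by the flow $H$, and invoking the classical dichotomy for linear flows on the torus. In both cases the continuity statement \CONT and the (non)existence statement \COMP will reduce to the machinery already established in Corollary~\ref{cor:cond_for_cont_ahom} and Lemma~\ref{lm:Gact_BaireSpaces}.

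First I would treat the case when $\alpha$ is rational. Writing $\alpha = p/q$ in lowest terms, every orbit of $H$ closes up after time $t=q$ (since then both coordinates advance by integers) and is therefore an embedded circle; these circles are mutually parallel and foliate $T^2$. Consequently the leaf space $\Ysp$ is homeomorphic to $S^1$ and $\prj:T^2\to\Ysp$ is a locally trivial fibration with fiber $S^1$. Property \CONT then follows from Corollary~\ref{cor:cond_for_cont_ahom}\ref{enum:condstar:loc_triv_fibr}. Moreover, since $T^2$ is compact, $\prj$ is automatically proper, so \COMP follows from Corollary~\ref{cor:cond_for_cont_ahom}\ref{enum:condstar:proper}.

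Next I would treat the irrational case by appealing to the Baire-category lemma. Here $H$ is an action of the group $G=\bR$ on $T^2$, and I would verify the four hypotheses of Lemma~\ref{lm:Gact_BaireSpaces}: the torus $T^2$ is a compact Hausdorff manifold, hence a Baire space; $\bR=\bigcup_{i=1}^{\infty}[-i,i]$ is a countable union of compact sets; since the orbits are one-dimensional subsets of the two-dimensional torus there are (uncountably) many distinct orbits, which gives hypothesis~\ref{enum:lm:Gact_BaireSpaces:2orb}; and every orbit is dense, which is hypothesis~\ref{enum:lm:Gact_BaireSpaces:dense_orb}. Lemma~\ref{lm:Gact_BaireSpaces} then yields directly that $\prj$ has property \CONT but fails \COMP. (Equivalently, \CONT alone can be read off from density of the orbits through Lemma~\ref{lm:each_orbit_dense}\ref{enum:Y_triv:leaf_dense}.)

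The only genuinely external input, and hence the main point to pin down, is the classical theorem of Kronecker: for irrational $\alpha$ every orbit of the linear flow is dense in $T^2$, while for rational $\alpha$ every orbit is a closed circle. This is a standard fact and carries no real difficulty, but it is the hinge on which both cases turn. Once this dichotomy is invoked, the argument consists solely of checking the hypotheses of results already proved in the paper, so I do not expect any further obstacle; combining the two cases gives that $\prj$ always satisfies \CONT and satisfies \COMP precisely when $\alpha$ is rational.
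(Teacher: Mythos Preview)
Your proposal is correct and matches the paper's own argument, which simply states that the proof is literally the same as that of Lemma~\ref{lm:irr_rotation}. The only nitpick is that in the rational case \COMP follows directly from properness (since $\prj^{-1}(\Lsp)$ is compact and surjects onto $\Lsp$), rather than from Corollary~\ref{cor:cond_for_cont_ahom}\ref{enum:condstar:proper}, which concludes \CONT; but this is a citation wrinkle, not a mathematical one.
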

The proof is literally the same as in Lemma~\ref{lm:irr_rotation}.

\subsection*{Denjoy example}
There is a well known class of orientation preserving circle homeomorphisms built by Arnauld Denjoy representatives of which have wandering intervals and irratonal rotation numbers, see~\cite{BrinStuck, KatokHasselblatt}.

Let $f : S^1 \to S^1$ be a homeomorphism from this class.
Then it is known to comply with the following two properties:
\begin{enumerate}[label=(\roman*), ref=(\roman*), itemsep=1ex]
\item\label{enum.Denjoy_1}
there exists a nowhere dense Cantor set $\Gamma \subset S^1$ such that $f(\Gamma) = \Gamma$ and for every $x\in\Gamma$ its orbit $\cO_f(x) = \mathop{\cup}\limits_{k \in \bZ} f^k(x)$ is dense in $\Gamma$;
%, i.e. $\Cl{\cO_f(x)} = \Cl{\bigcup_{k \in \bZ} f^k(x)} = \Gamma$ for each $x \in \Gamma$;
\item\label{enum.Denjoy_2}
there exists an open arc $J_0 \subset S^1$ such that $S^1 \setminus \Gamma = \mathop{\sqcup}\limits_{m \in \bZ} f^m(J_0)$.
%, where $J_0$ is an open subset of $S^1$ homeomorphic to an open interval $(0, 1)$.
\end{enumerate}
Let $\Partition$ be the partition of $S^1$ by orbits of $f$, $\Ysp = S^1 / \Partition$ the quotient space, and $\prj: S^1 \to \Ysp$ the natural projection.

Denote $J_m = f^{m}(J_0)$, $m \in \bZ$.
Notice that $J_{m+1} = f(J_m)$ and $f|_{J_m}: J_m \to J_{m+1}$ is a homeomorphism for each $m$.

Let also $\tG = p(\Gamma)$ and $J = p(S^1\setminus\Gamma)$.
Since $\Gamma$ and $S^1\setminus\Gamma$ are saturated with respect to $\Partition$, we have that $\Ysp = \tG \sqcup J$.
Moreover, $\tG$ is closed and $J$ is open in $\Ysp$.
It is straightforward to check that $J$ is homeomorphic to $(0, 1)$ in the topology induced from $\Ysp$ and that each map $\prj|_{J_m}: J_m \to J$, $m \in \bZ$, is a homeomorphism.

\begin{figure}
\includegraphics[width=7cm]{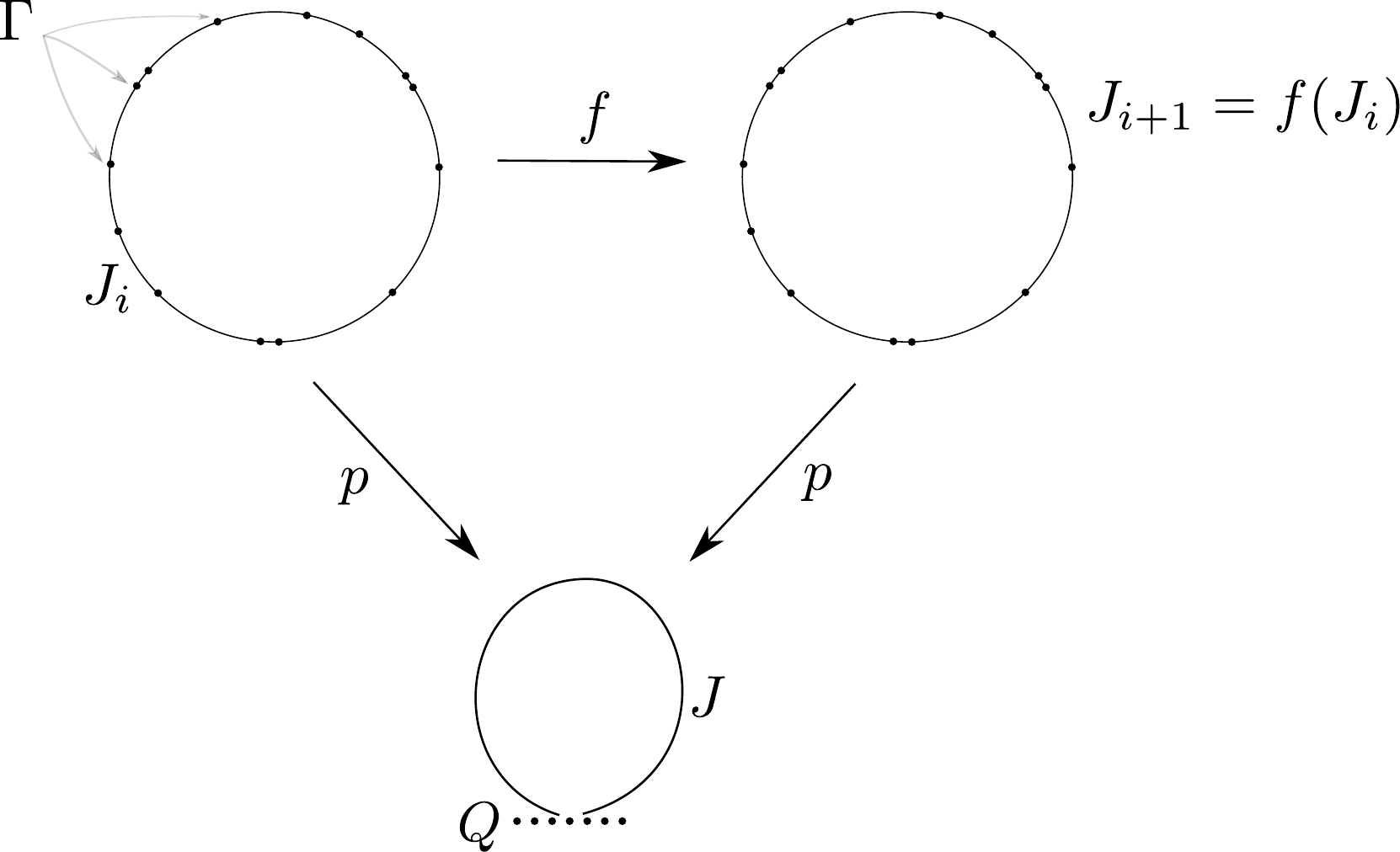}
\caption{}\label{fig:denjoy}
\end{figure}

Let us mention several properties of the topology of $\Ysp$.
\begin{lemma}\label{lm:denjoy_top_prop}
\begin{enumerate}[label={\rm(\alph*)}]
\item\label{enum:denj:sat_U}
Let $\Usp\subset S^1$ be an open subset.
If $\Usp\cap \Gamma\not=\varnothing$, then $\Usp$ intersects all orbits of $f$, so its saturation is $\prj^{-1}(\prj(\Usp))=S^1$.

\item\label{enum:denj:open_in_Y}
Every open subset $\Vsp \subset \Ysp$ intersecting $\tG$ coincides with $\Ysp$;

\item\label{enum:denj:comp_in_Y}
Every subset $\Lsp\subset \Ysp$ intersecting $\tG$ is compact.

\item\label{enum:denj:comt_to_tG}
Let $k:\Ysp\to\Ysp$ be a continuous map such that $k(y)\in J$ for some $y\in\tG$.
Then $k$ is a constant map into point $k(y)$.
\end{enumerate}
\end{lemma}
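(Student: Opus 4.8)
The plan is to prove the four assertions \ref{enum:denj:sat_U}--\ref{enum:denj:comt_to_tG} in order, since each subsequent statement leans on the previous one. The key structural facts I would exploit are that the orbit $\cO_f(x)$ of every point $x\in\Gamma$ is \emph{dense in $\Gamma$} (property~\ref{enum.Denjoy_1}) and that $\Gamma$ is nowhere dense with complementary arcs $J_m = f^m(J_0)$ (property~\ref{enum.Denjoy_2}). First, for~\ref{enum:denj:sat_U}, suppose $\Usp$ is open in $S^1$ with $\Usp\cap\Gamma\neq\varnothing$; pick $x\in\Usp\cap\Gamma$. Since $\cO_f(x)$ is dense in $\Gamma$ and $\Usp$ is an open neighborhood of $x$, for any other point $z\in\Gamma$ some iterate $f^k(x)$ lands in $\Usp$, showing $\Usp$ meets the orbit of $z$. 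For points $z\in S^1\setminus\Gamma$, $z$ lies in some arc $J_m$, and because the orbit of any boundary (Cantor) point clusters at $z$, the same density argument forces $\Usp$ to intersect $f^{-m}(J_0)$-type translates and hence the orbit through $z$; thus $\Usp$ meets every orbit and $\prj^{-1}(\prj(\Usp)) = S^1$.

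Next, \ref{enum:denj:open_in_Y} is essentially the reformulation of~\ref{enum:denj:sat_U} in $\Ysp$. If $\Vsp\subset\Ysp$ is open and $\Vsp\cap\tG\neq\varnothing$, then $\Usp := \prj^{-1}(\Vsp)$ is open in $S^1$ (by continuity of $\prj$) and $\Usp\cap\Gamma\neq\varnothing$. By~\ref{enum:denj:sat_U} its saturation is all of $S^1$, but $\Usp$ is already saturated, so $\Usp = S^1$ and hence $\Vsp = \prj(\Usp) = \Ysp$. For~\ref{enum:denj:comp_in_Y}, let $\Lsp\subset\Ysp$ meet $\tG$, and take any open cover of $\Lsp$ by open subsets of $\Ysp$; some member $\Vsp$ of the cover meets $\tG$, so by~\ref{enum:denj:open_in_Y} that single set equals $\Ysp\supset\Lsp$, giving a one-element subcover. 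Thus every such $\Lsp$ is compact.

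For the last and most delicate part~\ref{enum:denj:comt_to_tG}, suppose $k:\Ysp\to\Ysp$ is continuous with $k(y)\in J$ for some $y\in\tG$. The idea is that the open set $J\subset\Ysp$ is Hausdorff-like (homeomorphic to $(0,1)$), so I can separate $k(y)$ inside $J$ by a small open interval $\Wsp\subset J$ with $\Cl{\Wsp}\subset J$ a proper subset of $J$; then $k^{-1}(\Wsp)$ is an open neighborhood of $y$ in $\Ysp$. Since $y\in\tG$, this neighborhood meets $\tG$, so by~\ref{enum:denj:open_in_Y} we must have $k^{-1}(\Wsp) = \Ysp$, forcing $k(\Ysp)\subset\Wsp$. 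Because $\Wsp$ can be chosen to be an arbitrarily small interval around $k(y)$, the image $k(\Ysp)$ is contained in every such $\Wsp$, hence is the single point $k(y)$; thus $k$ is the constant map. The main obstacle I anticipate is making the separation argument airtight: one must verify that $\Ysp$ genuinely admits, for each point of $J$, an open neighborhood whose closure (in $\Ysp$) stays inside $J$ and is a proper subset, which is where the homeomorphism $J\cong(0,1)$ and the openness of $J$ in $\Ysp$ are essential. The crucial mechanism throughout is that \emph{any} open set touching the Cantor part $\tG$ is forced to be everything, which repeatedly collapses neighborhoods down to the whole space and drives the rigidity in~\ref{enum:denj:comt_to_tG}.
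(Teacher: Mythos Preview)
Your overall strategy matches the paper's proof almost exactly, and parts~\ref{enum:denj:open_in_Y} and~\ref{enum:denj:comp_in_Y} are fine as written. However, two of your arguments need correction.

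In~\ref{enum:denj:sat_U}, the density step is written backwards. You pick $x\in\Usp\cap\Gamma$ and then, for an arbitrary $z\in\Gamma$, argue that ``some iterate $f^k(x)$ lands in $\Usp$''---but $x$ is already in $\Usp$, and $f^k(x)$ lies on the orbit of $x$, not of $z$. What you need is that $\cO_f(z)$ is dense in $\Gamma$ (which property~\ref{enum.Denjoy_1} gives for \emph{every} $z\in\Gamma$), hence $\cO_f(z)$ meets the nonempty relatively open set $\Usp\cap\Gamma$. Similarly, for $z\in J_m$ your phrase ``the orbit of any boundary point clusters at $z$'' is the wrong direction: you need the arcs $J_{m+k}\ni f^k(z)$ to accumulate near $x\in\Usp\cap\Gamma$. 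The paper's clean version is to note that $\mathrm{diam}(J_k)\to 0$ and the endpoints of the $J_k$ are dense in $\Gamma$, so some entire $J_k$ lies in $\Usp$; then $f^{k-m}(z)\in J_k\subset\Usp$.

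In~\ref{enum:denj:comt_to_tG}, the step you flag as the ``main obstacle'' is genuinely an obstacle---but not one you can overcome, only bypass. For any nonempty open $\Wsp\subset J$, the closure of $\Wsp$ \emph{in $\Ysp$} contains all of $\tG$: every open neighborhood of a point of $\tG$ equals $\Ysp$ by~\ref{enum:denj:open_in_Y}, so it meets $\Wsp$. Thus $\Cl{\Wsp}\subset J$ is never true in $\Ysp$. Fortunately you never use this condition; all you need is that $\bigcap_{\Wsp}\Wsp=\{k(y)\}$ as $\Wsp$ ranges over open neighborhoods of $k(y)$ in $J$, which holds simply because $J\cong(0,1)$ is $T_1$. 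This is exactly how the paper finishes: drop any mention of closures and conclude directly from $k(\Ysp)\subset\Wsp$ for every such $\Wsp$.
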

\begin{proof}
\ref{enum:denj:sat_U}
Let $y \in \Usp\cap \Gamma$ and $x\in S^1$ be any point.
If $x\in\Gamma$ as well, then by~\ref{enum.Denjoy_1} its orbit $\cO_f(x)$ is dense in $\Gamma$, whence the neighborhood $\Usp\cap\Gamma$ of $y$ in $\Gamma$ intersects $\cO_f(x)$.
In particular, $\Usp\cap\cO_f(x)\not=\varnothing$.

Otherwise, $x\in J_i$ for some $i\in\bZ$.
Since $\Cl{J_i} \cap \Gamma \neq \emptyset$ and diameters of the intervals $J_j$ tend to $0$ when $j\to\infty$, it follows that $J_m\subset\Usp$ for some $m\in\bZ$, whence $f^{m-i}(x) \in J_m\subset\Usp$.
Thus, $\Usp\cap\cO_f(x)\not=\varnothing$ as well.

\ref{enum:denj:open_in_Y}
Let $\Vsp\subset\Ysp$ be open subset intersecting $\tG$.
Then the open saturated set $\Usp:=\prj^{-1}(\Vsp)$ intersects $\Gamma$.
Hence $\Usp=S^1$ by~\ref{enum:denj:sat_U} and thus $\Vsp=\Ysp$.

\ref{enum:denj:comp_in_Y}
Let $y\in \Lsp \cap \tG$, $\xi = \{\Wsp_{j}\}_{j\in\Lambda}$ be an open cover of $\Lsp$, and $\Wsp_i$ be any element containing $y$.
Then due to~\ref{enum:denj:open_in_Y}, $\Wsp_i=\Ysp$, whence $\{\Wsp_i\}$ is a one-element subcover of $\xi$ which covers $\Lsp$.

\ref{enum:denj:comt_to_tG}
Let $\Vsp\subset J$ be a neighborhood of $k(y)$.
Then $k^{-1}(\Vsp)$ is an open neighborhood of $y\in\tG$, whence by~\ref{enum:denj:sat_U}, $k^{-1}(\Vsp) = \Ysp$.
Thus the image of $\Ysp$ is contained in arbitrary neighborhood of $k(y)$.
Since $J$ is a $T_1$-space, we have that
$\{ \kdif(y) \} = \mathop{\cap}\limits_{\substack{\kdif(y) \in V, \\ \text{$V$ is open}}} V$,
whence $k(\Ysp)\subset\{ \kdif(y) \}$, and therefore $k$ is a constant map.
\end{proof}

\newcommand\tV{\tilde{V}}
\newcommand\tU{\tilde{U}}

\begin{lemma}\label{lm:Denjoy_C_and_not_K}
For Denjoy homeomorphism $f:S^1\to S^1$ the quotient map $\prj:S^1\to\Ysp$ has property \CONT but does not have property \COMP.
% following statements hold:
%\begin{enumerate}[label={\rm(\arabic*)}]
%\item\label{enum.Denjoy_cont_1}
%the induced map $\ahom:\End(S^1, \Partition) \to \End(\Ysp)$ is continuous, i.e. condition \CONT holds;
%\item\label{enum.Denjoy_comp_2}
%the projection $\prj$ does not satisfy property \COMP.
%\end{enumerate}
\end{lemma}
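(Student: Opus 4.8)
The plan is to prove the two assertions separately, exploiting the decomposition $\Ysp=\tG\sqcup J$ together with the topological description in Lemma~\ref{lm:denjoy_top_prop}. For property \CONT, the strategy is to check that $\ahom^{-1}(\NKU{\Lsp}{\Vsp})$ is open in $\HXP$ for every subbasic set $\NKU{\Lsp}{\Vsp}$ of the compact open topology on $\HY$, where $\Lsp\subset\Ysp$ is compact and $\Vsp\subset\Ysp$ is open. First I would record two structural facts: by Lemma~\ref{lm:denjoy_top_prop}\ref{enum:denj:open_in_Y} every open set $\Vsp\subsetneq\Ysp$ is disjoint from $\tG$, hence $\Vsp\subseteq J$; and since (again by that statement) $\tG$ is precisely the set of points whose only open neighborhood is $\Ysp$, each homeomorphism $\ahom(\dif)$ of $\Ysp$ must satisfy $\ahom(\dif)(\tG)=\tG$ and $\ahom(\dif)(J)=J$. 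This splits the verification into three cases. If $\Vsp=\Ysp$ then $\NKU{\Lsp}{\Vsp}=\HY$ and the preimage is all of $\HXP$. If $\Vsp\subsetneq\Ysp$ and $\Lsp\cap\tG\neq\varnothing$, then $\ahom(\dif)$ sends a point of $\Lsp\cap\tG$ into $\tG$, which is disjoint from $\Vsp\subseteq J$, so $\ahom^{-1}(\NKU{\Lsp}{\Vsp})=\varnothing$; in both cases the preimage is open.

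The essential remaining case is $\Vsp\subseteq J$ open and $\Lsp\subseteq J$ compact. Here the crucial observation is that, although \COMP fails globally, this particular $\Lsp$ does admit a compact section: since $\prj|_{J_0}:J_0\to J$ is a homeomorphism, the set $\Ksp:=(\prj|_{J_0})^{-1}(\Lsp)$ is a compact subset of $J_0\subset S^1$ with $\prj(\Ksp)=\Lsp$. I would then run the argument of Lemma~\ref{lm:cond_for_cont_ahom} verbatim with this locally available $\Ksp$ in place of a global one: given $\dif$ with $\ahom(\dif)(\Lsp)\subseteq\Vsp$, set $\Usp=\prj^{-1}(\Vsp)$, use $\prj\circ\dif=\ahom(\dif)\circ\prj$ together with surjectivity of $\prj$ to deduce $\dif(\Ksp)\subseteq\Usp$, and check that $\NKU{\Ksp}{\Usp}\cap\HXP\subseteq\ahom^{-1}(\NKU{\Lsp}{\Vsp})$. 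This exhibits an open neighborhood of $\dif$ inside the preimage, proving that $\ahom$ is continuous, i.e. property \CONT holds.

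For the failure of \COMP the idea is to restrict attention to the invariant Cantor set $\Gamma$, on which the dynamics is minimal. I would apply Lemma~\ref{lm:Gact_BaireSpaces} to the action of $G=\bZ=\bigcup_{i}\{-i,\dots,i\}$ on $\Gamma$: the space $\Gamma$ is a compact metric, hence Baire, Hausdorff space, $\bZ$ is a countable union of finite (compact) subsets, the action has uncountably many orbits, and each orbit is dense in $\Gamma$ by property~\ref{enum.Denjoy_1}. The lemma then produces a compact subset $\Lsp\subseteq\tG$ for which no compact $\Ksp\subseteq\Gamma$ satisfies $\prj(\Ksp)=\Lsp$. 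Finally I would transfer this to $\prj:S^1\to\Ysp$: the set $\Lsp$ is compact in $\Ysp$ by Lemma~\ref{lm:denjoy_top_prop}\ref{enum:denj:comp_in_Y} since it meets $\tG$, while $\prj^{-1}(\Lsp)\subseteq\prj^{-1}(\tG)=\Gamma$; because $\Gamma$ is closed in $S^1$, any compact $\Ksp\subseteq S^1$ with $\prj(\Ksp)=\Lsp$ would be a compact subset of $\Gamma$, contradicting the choice of $\Lsp$. Hence \COMP fails for $\prj$.

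I expect the main obstacle to be the bookkeeping in the \CONT part that confines all nontrivial subbasic sets to $J$: one must verify that homeomorphisms of $\Ysp$ preserve the splitting $\tG\sqcup J$ (so that the mixed case is vacuous) and that compactness of $\Lsp$ in $\Ysp$ agrees with ordinary compactness in $J\cong(0,1)$, so that the section through $J_0$ is genuinely compact. Once this is secured, continuity reduces cleanly to the already-proven Lemma~\ref{lm:cond_for_cont_ahom}, and the failure of \COMP is a direct application of the Baire-category Lemma~\ref{lm:Gact_BaireSpaces} to the minimal Cantor system on $\Gamma$.
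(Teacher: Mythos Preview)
Your argument for the failure of \COMP is identical to the paper's, and your treatment of the cases $\Vsp=\Ysp$ and $\Lsp,\Vsp\subseteq J$ in the \CONT part also matches. The one genuine difference is the mixed case $\Vsp\subseteq J$, $\Lsp\cap\tG\neq\varnothing$: you dispose of it by observing that homeomorphisms of $\Ysp$ must preserve $\tG$, so $\ahom^{-1}(\NKU{\Lsp}{\Vsp})\cap\HXP=\varnothing$. That is a clean shortcut, but it only yields continuity of $\ahom:\HXP\to\HY$, whereas the paper's Definition~\ref{def:CK} states \CONT for the full monoid map $\ahom:\End(S^1,\Partition)\to\End(\Ysp)$, and the paper's proof is written at that level. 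There the mixed case is \emph{not} vacuous: a constant $\Partition$-map can send $\tG$ into $J$. The paper handles it by invoking Lemma~\ref{lm:denjoy_top_prop}\ref{enum:denj:comt_to_tG} to force any $k\in\End(\Ysp)$ with $k(\Lsp)\subseteq\Vsp$ to be constant, then argues that $\dif$ itself is constant (since $\prj^{-1}(y)$ is totally disconnected) and exhibits an explicit subbasic neighborhood of $\dif$ mapping into $\NKU{\Lsp}{\Vsp}$. So your proof is correct for the homeomorphism version and buys simplicity there; the paper's argument covers the stronger monoid statement required by its own definition of \CONT.
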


\begin{proof}
{\em Verification of property \CONT.}
%\ref{enum.Denjoy_cont_1}
Let $\dif\in\End(S^1, \Partition)$, $k=\ahom(\dif) \in \EY = C(\Ysp, \Ysp)$, and
\[\NKU{\Lsp}{\Vsp} = \{ l \in \EY \mid l(\Lsp) \subset \Vsp \}\]
be a prebase set of the compact open topology on $\EY$ containing $k$, where $\Lsp \subset \Ysp$ is compact and $\Vsp \subset \Ysp$ is open.
In particular, $k(\Lsp) \subset \Vsp$.

Consider two cases.

a) If $\Vsp\cap\tG\not=\varnothing$, then $\Vsp=\Ysp$ by Lemma~\ref{lm:denjoy_top_prop}\ref{enum:denj:open_in_Y}.
Hence $\NKU{\Lsp}{\Vsp} = \NKU{\Lsp}{\Ysp}=\EY$.
Therefore $\ahom^{-1}\bigl(\NKU{\Lsp}{\Vsp}\bigr)=\ahom^{-1}\bigl(\EY\bigr)=\End(S^1, \Partition)$ is open.

b) Otherwise, $\Vsp\cap\tG=\varnothing$, i.e. $\Vsp\subset J$.
Again consider two subcases.

b1) Suppose $\Lsp\cap\tG\not=\varnothing$.
Since $k(\Lsp) \subset \Vsp$, we get from Lemma~\ref{lm:denjoy_top_prop}\ref{enum:denj:comt_to_tG} that $k$ is a constant map into some point $y\in\Ysp$.
As $\prj\circ\dif=k\circ \prj$, it follows that $\dif(S^1) \subset \prj^{-1}(y)$.
But $p^{-1}(y)$ has no nontrivial connected subsets, whence $\dif$ is a constant map, and its image is contained in some interval $J_i$.
Let $\Usp_i = J_i \cap \prj^{-1}(\Vsp)$.
Then $\mathcal{U} = \NKU{S^1}{\Usp_i} \cap \End(S^1,\Partition)$ is a neighborhood of $\dif$ in $\End(S^1,\Partition)$ and $\ahom\bigl(\mathcal{U}\bigr) \subset \NKU{\Lsp}{\Vsp}$.

b2) Finally, assume that $\Lsp\cap\tG=\varnothing$, so $\Lsp,\Vsp\subset J$.
Denote $\Ksp_0 = \prj^{-1}(\Lsp)\cap J_0$ and $\Usp = \prj^{-1}(\Vsp)$.
Then the restriction $\prj|_{\Ksp_0}: \Ksp_0 \to \Lsp$ is a homeomorphism, whence $\Ksp_0$ is compact.
Moreover, as $\prj\circ\dif=k\circ \prj$, we have that $\dif(\Ksp_0) \subset \Usp$, whence $\mathcal{U} := \NKU{\Ksp_0}{\Usp} \cap \End(S^1,\Partition)$ is a neighborhood of $\dif$ in $\End(S^1,\Partition)$.

We claim that $\ahom\bigl(\mathcal{U}\bigr) \subset \NKU{\Lsp}{\Vsp}$.
Indeed, if $g\in \mathcal{U}$, so $g(\Ksp_0)\subset \Usp$, then
\[
\ahom(g)(\Lsp) =
\ahom(g)\circ p(\Ksp_0) =
p \circ g(\Ksp_0) \subset p(\Usp) \subset \Vsp.
\]

Thus $\ahom$ is continuous, i.e. property \CONT holds for $\prj$.

\medskip

{\em Proof that \COMP fails.}
Consider the restriction $f_0 = f|_{\Gamma} : \Gamma \to \Gamma$. Since $\Gamma$ is invariant under $f$ by property~\ref{enum.Denjoy_1} and $\tG = \prj(\Gamma)$, then $\prj_0 = \prj|_{\Gamma} : \Gamma \to \tG$ is well defined projection of $\Gamma$ onto the quotient space $\tG$.

It is known from Baire theorem that $\Gamma$ is a Baire space, so we can apply Lemma~\ref{lm:Gact_BaireSpaces} to the action of $\bZ$ on $\Gamma$ given by its action map $\mu:\bZ \times \Gamma \to \Gamma$, $ \mu(k, z) = f^k(z) = f_0^k(z)$, $(k, z) \in \bZ \times \Gamma$. So, there exists a compact subset $\Lsp$ of $\tG$ such that there is no compact subset of $\Gamma$ which projects onto $\Lsp$ under $\prj_0$.

The property of being compact is intrinsic, hence there is no compact subset of $S^1$ which projects onto $\Lsp$ under $\prj$.
\end{proof}

\begin{remark}
Let $H$ be either $\bZ$ or $\bR$.
Consider a (topological) dynamical system $H : G \times \Xsp \to \Xsp$ on a locally compact Hausdorff space $\Xsp$. Let $\Ysp$ be a space of orbits of this dynamical system endowed with quotient topology and $\prj : \Xsp \to \Ysp$ be a natural projection.

Each closed subset $A$ of $\Xsp$ is a Baire space in the topology induced from $\Xsp$ by Baire category theorem since $A$ is a locally compact Hausdorff space.
Hence the considerations similar to ones made in the second part of the proof of Lemma~\ref{lm:Denjoy_C_and_not_K} show the following.
\begin{itemize}
\item[] If there exists a minimal subset of $\Xsp$ which contains more than one orbit, then the projection $\prj$ does not comply with the property~\COMP.
\end{itemize}
\end{remark}

\bibliographystyle{proc_igc_plain}
\bibliography{actions_of_foliated_homeomorphisms}

\end{document}